\definecolor{darkgreen}{rgb}{0,0.5,0}
\definecolor{darkblue}{rgb}{0,0,0.7}
\definecolor{darkred}{rgb}{0.9,0.1,0.1}
\newtheorem{proposition}{Proposition}
\newtheorem{theorem}[proposition]{Theorem}
\newtheorem{lemma}[proposition]{Lemma}
\newtheorem{corollary}[proposition]{Corollary}
\theoremstyle{definition}
\newtheorem{remark}[proposition]{Remark}
\newcommand{\cref}[1]{Corollary~\ref{c.#1}}
\numberwithin{equation}{section}
\numberwithin{proposition}{section}
\newcommand{\Z}{\mathbb{Z}}
\newcommand{\N}{\mathbb{N}}
\newcommand{\R}{\mathbb{R}}
\newcommand{\E}{\mathbb{E}}
\newcommand{\C}{\mathcal{C}}
\renewcommand{\P}{\mathbb{P}}
\newcommand{\F}{\mathcal{F}}
\newcommand{\eps}{\varepsilon}
\renewcommand{\subset}{\subseteq}
\renewcommand{\subset}{\subseteq}
\DeclareMathOperator{\cov}{cov}
\renewcommand{\tilde}{\widetilde}
\renewcommand{\L}{\mathcal{L}}
\renewcommand{\H}{\mathcal{H}}
\tikzset{
  on each segment/.style={
    decorate,
    decoration={
      show path construction,
      moveto code={},
      lineto code={
        \path [#1]
        (\tikzinputsegmentfirst) -- (\tikzinputsegmentlast);
      },
      curveto code={
        \path [#1] (\tikzinputsegmentfirst)
        .. controls
        (\tikzinputsegmentsupporta) and (\tikzinputsegmentsupportb)
        ..
        (\tikzinputsegmentlast);
      },
      closepath code={
        \path [#1]
        (\tikzinputsegmentfirst) -- (\tikzinputsegmentlast);
      },
    },
  },
  mid arrow/.style={postaction={decorate,decoration={
        markings,
        mark=at position .5 with {\arrow[#1]{stealth}}
      }}},
}
\begin{document}
\title{A central limit theorem for square ice}
\author[W. Wu]{Wei Wu}
\address[W. Wu]{Mathematics Department, NYU Shanghai \& NYU-ECNU Institute of Mathematical Sciences,  China}
\email{wei.wu@nyu.edu}
\maketitle

\begin{abstract}
We prove that the height function associated with the uniform six-vertex model (or equivalently, the uniform homomorphism height function from $\Z^2$ to $\Z$) satisfies a central limit theorem,  upon some logarithmic rescaling. 
\end{abstract}

\section{introduction}

The six vertex model was initially proposed by Pauling \cite{Pau} as an ideal model for ice.  The square ice, which is the six vertex model on the two dimensional square lattice,  can be defined using arrow configurations on the edges that satisfies the ice rule:  at each vertex there is an equal number of incoming and outcoming arrows.  This leads to six possible configurations at each vertex.  The six-vertex model with weights $a, b,c$ can be defined by giving the probability weight proportional to $a^{n_1+n_2} b^{n_3+n_4} c^{n_5+n_6}$ ,  where $n_i$ is the number of vertices of configuration type $i$, for every configuration that satisfies the ice rule.  

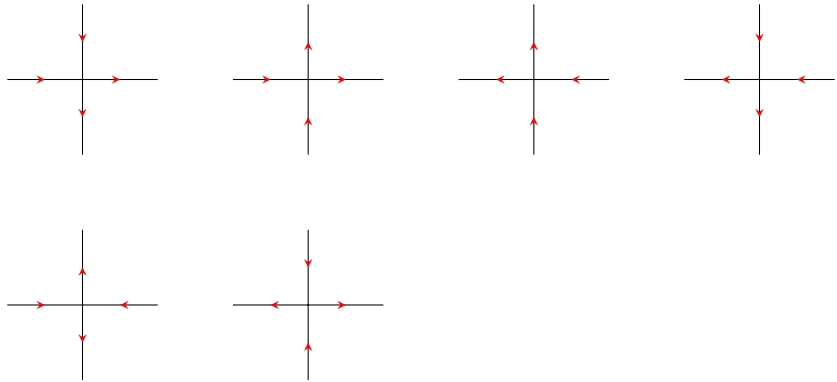
\begin{figure}[h]
\begin{center}
\begin{tikzpicture}
 \path [draw= black, postaction={on each segment={mid arrow=red}}]
 (2,0) -- (3,0)
(3,0) -- (4,0)
(3,-1) --  (3,0)
(3,0) --  (3,1);

\path [draw= black, postaction={on each segment={mid arrow=red}}]
 (6,0) -- (5,0)
(7,0) -- (6,0)
(6,-1) --  (6,0)
(6,0) --  (6,1);

\path [draw= black, postaction={on each segment={mid arrow=red}}]
 (9,0) -- (8,0)
(10,0) -- (9,0)
(9,0) --  (9,-1)
(9,1) --  (9,0);

\path [draw= black, postaction={on each segment={mid arrow=red}}]
 (-1,0) -- (0,0)
(0,0) -- (1,0)
(0,0) --  (0,-1)
(0,1) --  (0,0);

\path [draw= black, postaction={on each segment={mid arrow=red}}]
 (-1,-3) -- (0,-3)
(1,-3) -- (0,-3)
(0,-3) --  (0,-4)
(0,-3) --  (0,-2);

\path [draw= black, postaction={on each segment={mid arrow=red}}]
 (3,-3) -- (2,-3)
(3,-3) -- (4,-3)
(3,-4) --  (3,-3)
(3,-2) --  (3,-3);
\end{tikzpicture}
\end{center}
\caption{The six possible configurations,  each has a weight $a,a,b,b$ and $c, c$.}
\end{figure}

A six-vertex configuration naturally induces a height function,  defined on the dual square lattice,  by the following rule.  Let $e^*:=(u,v)$ be two adjacent vertices on the dual square lattice,  define the height difference,  $h(v) - h(u)$ to be $+1$ (respectively,   $-1$),  if the associated arrow of the primal edge $e$ is crossed from left to right (resp.  from right to left) when going from $u$ to $v$.  Thanks to the ice rule,  the height function on $\Z^2$ is well defined up to an additive constant.  Alternatively,  one may define the height function as an integer valued random surface $h:\Z^2 \to \Z$, such that $|h(u) - h(v) |=1$ for every adjacent $u,v$.  Hence it is also called homomorphism height functions, as it takes even values on even sublattices and odd values on odd sublattices.

One may also define the six-vertex model and its associated height function in a finite domain $D\subset \Z^2$,  as long as the boundary condition is \textit{admissible},  meaning that it extends to at least one six-vertex configuration (and homomorphism height function) in $D$.  

The phase diagram of the six-vertex model (with respect to $a,b,c$) can be found in \cite{Bax}.  It is believed,  that as long as $\Delta := \frac{a^2+b^2-c^2}{2ab} \in (-1,1)$,  the model is disordered,  such that the associated height function delocalizes,  as long as we take the size of the domain to infinity. The conjectured delocalization was only proved in a few limited regions of the phase space. There are extensive studies at the free fermion point $a=b=1, c=\sqrt{2}$,  where a bijection to a dimer model is available,  and the scaling limit of the height function is given by a Gaussian free field \cite{Ken,  Dub}.  The Gaussian free field scaling limit also extends to a neighborhood of the free fermion point,  using renormalization group arguments \cite{GMT}.  Another special case of interest is $a=b=1, c=2$,  where delocalization of the height function is obtained via a coupling to the random cluster model \cite{DCST, GP}.

The case $a=b=c=1$,  which corresponds to the uniform square-ice model (and uniform homomorphism height function),  has received great interest and is the subject of this paper.  Considerable progress has been made in recent years and delocalization of the corresponding height function has been established in \cite{DCHLR,  CPST, Sc}.  The long standing conjecture,  that the  height function has a Gaussian free field (GFF) scaling limit,  still remains open.  The proof in \cite{DCHLR} is quantitative and obtains a logarithmic fluctuation of the height function,  which is coherent with the GFF scaling limit.  However,  the exact asymptotics of the variance,  and the Gaussianity in the limit,  are still missing.  Recently,  the logarithmic fluctuations have been established for all values $a=b=1,  c\in (1,2)$ \cite{DCKMO},  and rotation invariance of any subsequential limit was obtained for $a=b=1, c\in (\sqrt 3, 2)$ in \cite{DCKKMO}.  The GFF scaling limit were also expected in the whole region $a=b=1,  c\in (1,2)$.  Let us also mention that the uniform square-ice model has an explicit bijection to the uniform proper 3-colorings on the dual graph.  For proper 3-colorings in $\Z^2$,  an algebraic rate of mixing and Bernoullicity of the measure,  which is suggested by the logarithmic fluctuation of the square-ice,  was proved in \cite{RS}.

In this paper we give another input to the conjectured Gaussian free field scaling limit for the uniform square ice height function,  by proving a central limit theorem.  

To state the result,  let a path ($\times$-path) be a sequence of vertices $v_0, \cdots v_n \in \Z^2$, such that for every $0\le i<n$,  $v_i$ and $v_{i+1}$ are at a Euclidean distance $1$ (resp. $\sqrt{2}$) of each other.  A finite subset $D \subset \Z^2$ whose boundary is a $\times$-circuit of even (resp. odd) vertices is called an even (resp. odd) domain.  Let $\phi_N$ be the uniform distribution on homomorphism height functions defined on an even domain $D_N$, where $ [-N,N]\cap \Z^2 \subset D_N \subset [-N-1,N+1]\cap \Z^2$,  with zero boundary condition,  we prove that 

\begin{theorem}\label{t.clt}
There exist constants $C_1, C_2<\infty$ and $\sigma_N$,  such that $C_1 (
\log N)^\frac 12 \leq \sigma_N \leq C_2 (
\log N)^\frac 12 $,  and such that $\phi_N(0) / \sigma _N $ converges in distribution to a normal random variable. 
\end{theorem}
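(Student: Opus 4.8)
The plan is to realize $\phi_N(0)$ as a sum of (weakly) correlated increments along a path, or more precisely as a terminal value of an additive functional, and then exploit a martingale or mixing structure to extract Gaussianity. Concretely, I would fix a lattice path $\gamma$ from the boundary $\partial D_N$ to the origin of length $\asymp N$, and write $\phi_N(0) = \sum_{i} \xi_i$, where $\xi_i = \pm 1$ is the height increment of $\phi_N$ across the $i$-th edge of $\gamma$. Since the boundary condition is zero and the height function is a homomorphism, $\sum_i \xi_i$ telescopes to $\phi_N(0)$ regardless of the choice of $\gamma$. The variance bound $C_1 \log N \le \sigma_N^2 \le C_2 \log N$ is, by hypothesis, available from the known logarithmic delocalization estimates of \cite{DCHLR, DCKMO}; what remains is to prove asymptotic normality of $\phi_N(0)/\sigma_N$.

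The main device I would use is a \emph{resampling / spatial Markov} decomposition combined with the Lindeberg exchange method. Partition the domain into nested dyadic annuli $A_k = \{2^k \le |x|_\infty \le 2^{k+1}\}$ for $0 \le k \le \log_2 N$, and let $Y_k$ denote the contribution to $\phi_N(0) - \phi_N(x_k)$ accumulated within annulus $A_k$ along $\gamma$, where $x_k \in \partial A_k$. Using the domain Markov property of the six-vertex height function, conditionally on the heights on the circuits $\partial A_k$, the increments $Y_k$ in different annuli are independent. One then shows: (i) the conditional law of each $Y_k$, given the boundary heights on $\partial A_k$ and $\partial A_{k+1}$, has variance of order a constant (this is the ``one scale contributes $O(1)$'' heuristic underlying the $\log N$ variance), uniformly in the (bounded, by a priori height oscillation estimates on annuli) boundary data; and (ii) higher conditional moments of $Y_k$ are also $O(1)$, so that a Lindeberg-type condition holds. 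Summing $\asymp \log N$ nearly-independent $O(1)$ contributions and controlling the dependence through the boundary heights gives the CLT.

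The step I expect to be the main obstacle is establishing enough \emph{quantitative decorrelation} between scales: even though the heights on $\partial A_k$ screen the interior from the exterior in the strict Markov sense, the \emph{distribution} of those boundary heights is itself correlated across $k$, and one needs that the fluctuations of the annulus boundary profiles do not destroy the approximate independence or distort the per-scale variance. This is exactly where the rotational-invariance and coupling inputs for square ice (e.g. \cite{DCKKMO, RS}) are delicate; I would handle it by (a) an a priori bound, uniform in boundary conditions, showing the oscillation of $\phi_N$ on each annulus is tight at scale $O(1)$, so the relevant boundary data lives in a compact set; (b) a stability estimate showing the conditional variance of $Y_k$ is continuous in the boundary profile and bounded below; and (c) feeding these into a martingale CLT for the filtration generated by the boundary circuits $\partial A_{\log N}, \partial A_{\log N - 1}, \dots$, applying a Lyapunov condition rather than requiring exact independence. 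The normalization $\sigma_N^2 = \var \phi_N(0)$ then automatically has the right order by the cited delocalization results, and the martingale CLT yields convergence of $\phi_N(0)/\sigma_N$ to a standard normal.
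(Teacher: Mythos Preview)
Your high-level architecture --- a dyadic annular decomposition plus a martingale CLT with one filtration step per scale --- matches the paper's. The gap is in the implementation of the filtration and the decoupling step. You filter by the height profile on \emph{deterministic} circuits $\partial A_k$; the paper filters by \emph{random} level loops $\mathcal{C}_k$ (the outermost $\times$-loop of constant height inside the box of radius $2^{-k}N$). This choice is not cosmetic: on a level loop the boundary condition is a single integer, so by symmetry $\E[\phi_N(0)\mid\mathcal{F}_k]=\phi_N|_{\mathcal{C}_k}$ exactly, and the martingale increments $\Delta_k$ are literally the height differences between successive level circuits, i.e.\ a simple random walk in $2\Z$ evaluated at random times. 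Your step (a), that the oscillation of $\phi_N$ on $\partial A_k$ is tight at scale $O(1)$, is false: two points on $\partial A_k$ at distance $\asymp 2^k$ have height difference of typical size $\asymp\sqrt{k}$, so the boundary profile does not live in a compact set, and your continuity/stability argument (b) has nothing to bite on.

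What replaces your (a)--(c) in the paper is a coupling argument for the decorrelation $\cov[\Delta_k^2,\Delta_l^2]$. Because the heights on the nested level loops form a simple random walk started at $0$, the probability that this walk fails to return to $0$ between scales $l$ and $k$ is controlled by the ballot theorem, giving a bound $\asymp (k-l)^{-1/2}$. On the event that it does return to $0$ at some loop $\mathcal{L}^*$ between the scales, the absolute-value FKG inequality (Proposition~\ref{p.absfkg}) lets one monotonically couple the field inside $\mathcal{L}^*$ to a field with zero boundary condition on a deterministic reference box, which decouples $\Delta_k^2$ from the outside. The RSW input (Proposition~\ref{l.loop}) guarantees enough level loops exist in each dyadic annulus for the random walk picture to be nondegenerate. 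None of these three ingredients --- level-loop filtration, ballot theorem, $|h|$-FKG monotone coupling --- appears in your outline, and without the first one the other two have no entry point.
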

\begin{remark}
Theorem \ref{t.clt} implies the Gaussianity of the scaling limit.  It is, however, insufficient to give the asymoptotic convergence of the variance $\frac{\sigma_N}{\sqrt{ \log N}}\to g$ for some $g>0$,  which we believe require new ideas.  It is conceivable,  that if such asymptotic convergence were established,  then combining with the Gaussianity proof in this paper one can resolve the conjectured GFF scaling limit.  See Section \ref{s.open} for more discussions.  
\end{remark}
We also obtain a multivariate CLT,  jointly for the height function $(\phi_N(x_{i,N}))_{i=1}^k$ in the bulk of $D_N$. 

\begin{theorem}\label{t.multi}
Let $x_1, \cdots x_k \in (-1,1)\times(-1,1)$,  and suppose $x_{1,N}, \cdots, x_{k,N}$ are points in $D_N$ such that $\frac 1N x_{i,N} \to x_i$ as $N\to \infty$.  There exist constants $C_1, C_2<\infty$ and $\sigma_{1,N}, \cdots , \sigma_{k,N},$,  satisfying $C_1 (
\log N)^\frac 12 \leq \sigma_{i,N} \leq C_2 (
\log N)^\frac 12 $,  for all $i\in {1, \cdots, k}$,  and such that $\phi_N(x_{1,N}) / \sigma _{1,N}, \cdots,   \phi_N(x_{k,N}) / \sigma _{k,N}$ converges in distribution to independent normal random variables. 
\end{theorem}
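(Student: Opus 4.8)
The plan is to reduce the joint statement to the one-point result (Theorem \ref{t.clt}) together with an asymptotic-independence argument. The key structural input is that, for points $x_{1,N},\dots,x_{k,N}$ whose rescalings converge to \emph{distinct} interior points $x_1,\dots,x_k$, the Euclidean separations $|x_{i,N}-x_{j,N}|$ are of order $N$, so the corresponding height values are "macroscopically far apart," whereas each individual $\phi_N(x_{i,N})$ accumulates its $\sqrt{\log N}$ fluctuation from \emph{all} scales between $O(1)$ and $O(N)$. The heuristic is then that the part of the fluctuation above scale $\varepsilon N$ is common to nearby points but is only an $O(1)$ contribution compared to $\sigma_{i,N}\asymp\sqrt{\log N}$; the bulk of the variance comes from scales below $\varepsilon N$, which for distinct $x_i$ are resolved independently. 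I would make this rigorous by a martingale/telescoping decomposition of each $\phi_N(x_{i,N})$ over dyadic scales, exactly as in the proof of Theorem \ref{t.clt}, and then show the cross-covariance at large scales is negligible after normalization.

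The steps, in order. First, fix radii $r_m = 2^m$ for $m$ up to $\log_2(\eps N)$ and, for each $i$, write
\[
\phi_N(x_{i,N}) \;=\; \sum_{m} \bigl(\phi_N(x_{i,N}) \text{ conditioned on scale } r_{m+1}\bigr) - \bigl(\cdots \text{ at scale } r_m\bigr) \;+\; R_{i,N},
\]
where $R_{i,N}$ is the contribution of scales above $\eps N$ and is $O_{\eps}(1)$ in $L^2$ uniformly in $N$ (this is where the a priori logarithmic \emph{upper} bound on the variance, available from \cite{DCHLR,DCKMO} and used already for Theorem \ref{t.clt}, enters). Second, invoke the \emph{spatial Markov property} of the height function: once the heights are revealed outside disjoint balls $B(x_{i,N}, \eps N)$ — which are genuinely disjoint for $N$ large since the $x_i$ are distinct and interior — the restrictions of the field inside these balls are \emph{conditionally independent}. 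Hence the small-scale parts of the $k$ sums, which carry variance $(1-o_\eps(1))\sigma_{i,N}^2$, become independent after conditioning on a boundary $\sigma$-algebra whose own contribution is negligible. Third, apply the one-point CLT of Theorem \ref{t.clt} inside each ball (with the appropriate admissible boundary condition — one checks the proof of Theorem \ref{t.clt} is robust to the boundary data and to recentering the domain at $x_{i,N}$) to conclude that each normalized small-scale part is asymptotically $\mathcal N(0,1)$; independence across $i$ then gives joint convergence to a product of Gaussians. Finally, a standard Slutsky-type argument removes the $R_{i,N}$ remainders and lets $\eps\downarrow 0$ along a diagonal.

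The main obstacle I anticipate is making the conditional-independence step quantitative rather than exact: the spatial Markov property gives \emph{exact} independence of the field strictly inside disjoint balls once the exterior is revealed, but the event that the normalized height value $\phi_N(x_{i,N})/\sigma_{i,N}$ is essentially measurable with respect to the field inside $B(x_{i,N},\eps N)$ is only an approximate statement, with error controlled by the variance of the large-scale remainder $R_{i,N}$. Thus one needs a clean estimate of the form $\var(R_{i,N}) \le C\log(1/\eps)$ (or better, $\le C$) uniformly in $N$, together with a matching \emph{lower} bound $\sigma_{i,N}^2 \ge c\log N$ so that $\var(R_{i,N})/\sigma_{i,N}^2 \to 0$; both of these should be inherited from the ingredients behind Theorem \ref{t.clt}, but verifying that the lower bound survives the recentering and the change of domain/boundary condition is the delicate point. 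A secondary technical issue is that the $x_i$ are only assumed in the open square $(-1,1)^2$, so the balls $B(x_{i,N},\eps N)$ may poke outside $D_N$ for moderate $N$; one handles this by taking $\eps$ smaller than $\min_i \dist(x_i, \partial([-1,1]^2))$ and using the near-diagonal regularity of the domains $D_N$.
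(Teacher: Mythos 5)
Your overall architecture (dyadic decomposition, macroscopic separation of the $x_i$, exact conditional independence inside disjoint balls after revealing the exterior, then a one-point CLT inside each ball plus Slutsky) is plausible, but the step you dismiss as ``one checks the proof of Theorem~\ref{t.clt} is robust to the boundary data'' is precisely where the real work lies, and as written it is a genuine gap. Theorem~\ref{t.clt} is proved for the \emph{zero} boundary condition on $D_N$; after you condition on the field outside $B(x_{i,N},\eps N)$ you face a random, non-constant admissible boundary condition, for which the RSW input and the martingale construction do not apply directly --- one must first explore down to a level loop (constant height) and then show that the conditional law in the bulk \emph{forgets} that boundary data. More importantly, to conclude a Gaussian limit with the deterministic normalization $\sigma_{i,N}$ you need the conditional variance of your small-scale part, given typical exterior data, to concentrate at $\sigma_{i,N}^2(1+o(1))$; since the paper does not (and you cannot) assert convergence of $\sigma_N^2/\log N$, you cannot argue this by ``the ball has radius $\eps N$, hence variance $\sim\log(\eps N)\sim\log N$''. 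Without such a comparison your scheme only yields a variance \emph{mixture} of Gaussians, not independent normals. The mechanism that closes both holes in the paper is the decoupling estimate: the absolute-value FKG monotone coupling of the two measures combined with the ballot-theorem estimate for the random walk formed by the heights on successive level loops, which shows the boundary condition is forgotten at a polynomial rate in the number of dyadic scales (Lemma~\ref{l.decouple}, and its two-point analogue Lemma~\ref{l.cross}, giving in particular the domain-insensitivity of the per-scale second moments as in \eqref{e.2}). Your proposal never supplies this ingredient, and the variance bound $\var(R_{i,N})\le C\log(1/\eps)$ you do invoke is not a substitute for it.

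For comparison, the paper's proof avoids conditioning on exteriors of macroscopic balls altogether: it uses the Cram\'er--Wold device and applies the same martingale CLT to the linear combination $\sum_i a_i X_{i,m}$ of per-scale, per-point martingale differences; the diagonal terms are handled exactly as in Theorem~\ref{t.clt}, and the off-diagonal terms vanish because, once the scale $m$ exceeds the separation scale $m_0$ of the points, $\E[\tilde\Delta_{i,m}\,|\,\mathcal{C}_{m_0^*+}(x_{i,N})]=O((m-m_0-\log\log N)^{-1/2+\eps})$ by the same coupling-plus-ballot argument. Your route could in principle be completed, but only by importing that decoupling machinery, at which point it is no shorter than the paper's argument; its one genuine advantage is that independence across points comes for free from the spatial Markov property rather than through the cross-covariance estimate \eqref{e.cross1}--\eqref{e.cross2}.
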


The proof of Theorem \ref{t.clt} and \ref{t.multi} is based on an application of the martingale central limit theorem. The key inputs include a Russo-Seymour-Welsh estimate for the level line of the height function (established in \cite{DCHLR}),  the FKG inequality for the absolute value height function and the monotone coupling (used in  \cite{DCHLR} and \cite{RS}),  and random walk estimates.  We will summarize these tools in the next section,  and the proof of Theorem \ref{t.clt} and \ref{t.multi} will be given in Section \ref{s.clt} and \ref{s.multi}.  

\section{Preliminaries}

We begin with introducing some notations, following closely with that of \cite{DCHLR, RS}.  Given an even domain $D\subset \Z^2$,  we denote by $\P_D^0$ (resp.  $\E_D^0$) the uniform distribution (resp.  the expectation) on homomorphism height functions in $D$ with zero boundary condition.  Recall that $D_N$ is an even domain such that $ [-N,N]^2\cap \Z^2 \subset D_N \subset [-N-1,N+1]^2\cap \Z^2$.  The following variance bound was established in \cite{DCHLR}. 

\begin{theorem}[Theorem 1.1 of \cite{DCHLR}] \label{t.var}
There exist $c,C\in (0,\infty)$ such that for every $N\ge 1$,  $c\log N\le \E_{D_N}^0[\phi_N(0)^2] \le C\log N$.
\end{theorem}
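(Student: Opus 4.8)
As the attribution indicates, this is Theorem~1.1 of \cite{DCHLR}; I sketch the argument one would give. It splits into an upper bound and a lower bound, both resting on the Russo--Seymour--Welsh estimate for the level lines of the height function, but using it in opposite directions --- the upper crossing bounds confine the fluctuations, the lower crossing bounds force them.

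\emph{Upper bound: a multi-scale martingale decomposition.} Fix $n$ with $2^n\asymp N$, set $\Lambda_j:=[-2^j,2^j]^2\cap\Z^2$ for $0\le j<n$ and $\Lambda_n:=D_N$, and let $\mathcal G_j$ be generated by the restriction of $h$ to the complement of the interior of $\Lambda_j$, with $\mathcal G_0$ the full $\sigma$-algebra; thus $\mathcal G_n$ records only the deterministic zero boundary data. By the domain Markov property --- conditioning on the values of $h$ along a nearest-neighbour circuit separating $\Lambda_j$ from the rest of $D_N$ makes the two sides independent --- $M_j:=\E_{D_N}^0[\phi_N(0)\mid\mathcal G_j]$ is a function of the values of $h$ on $\partial\Lambda_j$ alone, and $(M_j)_{j=n,\dots,0}$ is a martingale with $M_0=\phi_N(0)$ and $M_n=\E_{D_N}^0[\phi_N(0)]=0$ (the last by the symmetry $h\mapsto-h$ of $\P_{D_N}^0$). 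Hence its increments $\xi_j:=M_{j-1}-M_j$ are orthogonal and $\E_{D_N}^0[\phi_N(0)^2]=\sum_{j=1}^n\E_{D_N}^0[\xi_j^2]$. Viewing $M_j$ as a function of the boundary data on $\partial\Lambda_j$, the monotone coupling shows it is (essentially) $1$-Lipschitz for the sup-norm and equals the common value when that data is constant; hence $|\xi_j|$ is at most a fixed multiple of the oscillation of $h$ across the dyadic annulus $\ov\Lambda_j\setminus\Lambda_{j-1}$. The RSW estimate of \cite{DCHLR}, together with the FKG inequality for $|h|$, shows this oscillation has an exponential tail uniform in $j$ and $N$, so $\E_{D_N}^0[\xi_j^2]\le C$ uniformly, and summing over the $n\asymp\log N$ scales gives $\E_{D_N}^0[\phi_N(0)^2]\le C'\log N$.

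\emph{Lower bound: delocalisation --- the hard part.} By the orthogonality above it suffices to show $\E_{D_N}^0[\xi_j^2]\ge c$, uniformly in $N$, for all scales $j$ in a bulk range $C_0\le j\le n-C_0$, since then $\E_{D_N}^0[\phi_N(0)^2]\ge c(n-2C_0)\ge c'\log N$. As $\E_{D_N}^0[\xi_j^2]=\E_{D_N}^0[\var(M_{j-1}\mid\mathcal G_j)]$, one must show the conditional law of $M_{j-1}$ given the values of $h$ on $\partial\Lambda_j$ is genuinely spread out; I would get this from the RSW lower crossing bounds. With probability bounded below uniformly in $j$ and $N$, the uniform height function in $\Lambda_j$ with the prescribed outer data contains a level-line circuit surrounding $\Lambda_{j-1}$ at reference height $+1$, and --- using $h\mapsto-h$ --- one at reference height $-1$ with comparable probability; on these two positive-probability events the innermost such circuit decouples $\Lambda_{j-1}$ from $\partial\Lambda_j$ while pinning its effective boundary height to values that differ by at least $2$, so $M_{j-1}$ takes two values a bounded-below distance apart, forcing $\E_{D_N}^0[\xi_j^2]\ge c$. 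Turning the heuristic ``conditioning on the innermost decoupling circuit'' into a bound that is uniform across all bulk scales --- which is where the FKG inequality for $|h|$ and the monotone coupling are indispensable --- is the step I expect to be hardest, and it is precisely what is carried out in \cite{DCHLR}.
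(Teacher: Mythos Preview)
The paper does not give its own proof of this statement; it is quoted from \cite{DCHLR} with only the one-line remark that the same argument carries over from the torus to even domains. Your sketch is a plausible reconstruction of that external proof, and in that sense it matches what the paper does (namely, defer to \cite{DCHLR}).

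That said, the paper does contain---as a byproduct of the CLT argument in Section~\ref{s.clt}---an implicit alternative derivation of the variance bound, and it is worth noting how it differs from yours. You filter by the field outside the \emph{fixed} dyadic boxes $\Lambda_j$; the paper instead filters by the field outside the \emph{random} level circuit $\mathcal C_k$, defined as the outermost loop among the $(\L_j)$ that is contained in $B_k$. Because the height is constant along $\mathcal C_k$, the conditional expectation $\E[\phi_N(0)\mid\mathcal F_k]$ equals that constant value exactly, and the increment $\Delta_k$ is just the height difference between two consecutive level circuits---i.e.\ a random-walk-type step of size $\pm 2$ times the number of level loops crossed. The two-sided bound $c_1\le\E[\Delta_k^2]\le c_2$ then follows directly from the loop-counting estimate of Proposition~\ref{l.loop}, and summing over $\log N$ scales gives both the upper and lower bound simultaneously. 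Your dyadic-box filtration is simpler to define, but the increments $\xi_j$ are less explicit: the upper bound needs the Lipschitz/comparison argument you outline, and the lower bound---as you yourself flag---requires genuinely showing that the conditional law of $M_{j-1}$ is spread out, which is where the work in \cite{DCHLR} lies. The level-loop filtration trades a more intricate stopping construction for increments that are transparent, bypassing that difficulty.
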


Theorem 1.1 of \cite{DCHLR} was originally stated for height differences on a torus,  but the same proof also gives the above result on even domains. 

For every $m,n\ge 1$,  we write $\Lambda_{m,n}:=( [-n,n]\times [-m,m])\cap \Z^2$. Let $\H_{h\ge k } (\Lambda_{\rho n, n})$ (resp.  $\H_{h= k } ^\times(\Lambda_{\rho n, n})$) be the event that there is a path (resp.  $\times$-path) with height at least $k$ (resp. equal to $k$) crossing  from the left to the right of $\Lambda_{\rho n, n}$ and contained in $\Lambda_{\rho n, n}$.  The following Russo-Seymour-Welsh theorem was proved in \cite{DCHLR}.

\begin{theorem}[Theorem 1.2 of \cite{DCHLR}]
For every $\eps, R, \rho, k>0$, there exists $c= c(\eps, R, \rho, k)>0$,  such that for every $n$ and every even domain $D \subset D_{Rn}$ such that the distance between $\Lambda_{\rho n, n}$ and $\partial D$ is at least $\eps n$,  
\begin{align*}
c &\leq \P_D^0(\H_{h\ge k } (\Lambda_{\rho n, n})) \leq 1-c, \\
c &\leq \P_D^0(\H_{h= k } ^\times(\Lambda_{\rho n, n})) \leq 1-c.
\end{align*}
\end{theorem}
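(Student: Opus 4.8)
\emph{Overall strategy.} The plan is to establish both families of estimates by a Russo--Seymour--Welsh bootstrap, using three structural inputs in place of the independence available in Bernoulli percolation: planar duality for $\Z^2$, which relates a nearest-neighbour crossing of a super-level set $\{h\ge k\}$ to a $\times$-path crossing of a level line of the complementary set; the lattice symmetries together with the $h\mapsto -h$ symmetry of $\P_D^0$, legitimate because the boundary condition is zero; and positive association, namely the FKG inequality for the absolute-value height function $|h|$ and the monotone coupling $|h|_D\preceq|h|_{D'}$ for $D\subset D'$. Every event in the statement is measurable with respect to the restriction of $h$ to $\Lambda_{\rho n,n}$, so monotonicity in the domain reduces the uniformity over admissible $D\subset D_{Rn}$ with $\dist(\Lambda_{\rho n,n},\partial D)\ge\eps n$ to the two extremal domains; one may therefore work with a fixed sequence, the parameters $\eps,R$ entering only quantitatively, through the strength of the monotone coupling.

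\emph{Step 1: reduction to crossings of squares.} I would first reduce everything to the single estimate that there is $c_0=c_0(k)>0$ with
\[
c_0\le\P_D^0\bigl(\H_{h\ge k}(\Lambda_{n,n})\bigr)\le 1-c_0,\qquad c_0\le\P_D^0\bigl(\H_{h=k}^\times(\Lambda_{n,n})\bigr)\le 1-c_0 ,
\]
for every $n$ and every admissible domain. Granting this, a standard gluing argument upgrades square crossings to crossings of $\Lambda_{\rho n,n}$ for any fixed $\rho$: cover the long rectangle by $O(\rho)$ overlapping translates of $\Lambda_{n,n}$, take a left--right crossing of each together with a top--bottom crossing of each overlap, and use the FKG inequality --- these being increasing events for $|h|$, with the $h\mapsto -h$ symmetry used to fix signs --- to turn the product of the bounded-below square-crossing probabilities into a bounded-below probability of the concatenated crossing. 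The upper bound $\le 1-c$ in $\Lambda_{\rho n,n}$ then follows by applying the lower bound to the dual $\times$-crossing in the orthogonal (``easy'') direction.

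\emph{Step 2: the square estimate.} For a square, the complement of $\H_{h\ge k}(\Lambda_{n,n})$ is, by the usual $\Z^2$ duality, a top--bottom $\times$-path in $\{h\le k-1\}$; feeding in the $h\mapsto -h$ symmetry and a $\pi/2$ rotation identifies the law of this dual event with that of a horizontal level-set crossing, and a short self-duality argument --- after accounting for the parity of the two sublattices, on which $h$ takes values of opposite parity --- forces the crossing probabilities into $[c_0,1-c_0]$ for small fixed $k$. To pass to general fixed $k$ I would build a $\{h\ge k\}$-crossing from $k$ nested copies of the basic event: conditionally on a $\{h\ge 1\}$-crossing of a sub-annulus the monotone coupling puts $h\ge 1$ there, so re-centring the zero level and iterating $k$ times produces the crossing at a cost $c_0(k)>0$ --- no uniformity in $k$ is needed. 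The same scheme handles the $\times$-path events $\H_{h=k}^\times$, which correspond, in the percolation-type representation of the height function, to crossings of a level line dual to a super-level-set crossing.

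\emph{Step 3: the RSW gluing, and the conclusion.} The heart of the matter --- and the step I expect to be hardest --- is the passage between aspect ratios without independence. Here I would follow a Tassion-type scheme: attach to each dyadic box a ``crossing quality'' statistic, use the symmetries of Step 2 to show it cannot remain near its minimum as the scale increases, and combine a horizontal crossing of a box with a vertical half-crossing issued from one of its endpoints, the combination being precisely where the FKG inequality for $|h|$ is used. The model-specific difficulty is to produce the required \emph{conditional} domination: conditionally on the lowest $\{h\ge k\}$-crossing of a box, the law of $h$ above it should dominate a zero-boundary law in a smaller box. This is the content of the monotone coupling together with the domain-Markov property of $\P_D^0$, but making it rigorous --- in particular handling the sublattice parities and treating the super-level-set and level-line events in tandem --- is the technical core of the argument. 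Once Steps 1--3 are in place, the lower bounds in the statement are the RSW crossing estimates and the upper bounds $\le 1-c$ follow by applying them to the dual crossings in the orthogonal direction; all constants are positive functions of $\eps,R,\rho,k$ alone, as required.
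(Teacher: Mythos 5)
First, a structural point: this theorem is not proved in the paper at all --- it is quoted verbatim as Theorem 1.2 of \cite{DCHLR} and used as a black box --- so there is no internal proof to compare your attempt with, and your proposal has to be judged as a reconstruction of the argument of \cite{DCHLR}. Judged that way, it is a programme rather than a proof, and the missing piece is exactly the one you postpone to Step 3. As written, Step 1 is circular: covering $\Lambda_{\rho n,n}$ by $O(\rho)$ overlapping translates of the square $\Lambda_{n,n}$ and gluing by FKG requires top--bottom crossings of the overlaps, and those overlaps are rectangles crossed in their \emph{long} direction, i.e.\ precisely the hard-direction crossings that an RSW theorem is supposed to produce; square crossings alone cannot be concatenated. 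Hence the entire content of the theorem sits in the aspect-ratio step, which you describe only as a plan (``Tassion-type scheme'', ``conditional domination given the lowest crossing''). Asserting that this domination ``is the content of the monotone coupling together with the domain Markov property'' does not close it: after exploring the lowest $\{h\ge k\}$-crossing one knows exact values only on the explored region and a one-sided bound on the crossing itself, the resulting comparison of boundary conditions must be made quantitative in a form that survives iteration over scales and is uniform in $n$, $k$ and the domain, and the parity of the two sublattices interferes with the $h\mapsto-h$ self-duality you invoke at level $1$. This renormalization step is the technical core of the long argument in \cite{DCHLR}, and it is absent here.

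Two further points would fail as stated even at the level of the outline. The reduction of the uniformity over domains to ``two extremal domains'' by ``monotonicity in the domain'' is not available: the domain-monotonicity used in this paper (and in \cite{DCHLR, RS}) is a stochastic domination for $|\phi|$ under zero boundary conditions, while $\H_{h\ge k}$ and $\H_{h=k}^\times$ are not functions of $|\phi|$, and for $\phi$ itself there is no stochastic ordering between zero-boundary measures on nested domains (both laws are symmetric; the larger domain is merely more spread out). Handling a general even $D\subset D_{Rn}$ with $\dist(\Lambda_{\rho n, n},\partial D)\ge \eps n$ therefore needs an actual argument --- for instance exploring level loops in the surrounding annulus and comparing boundary conditions --- and this is part of what the $\eps n$ hypothesis is for. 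Likewise, the gluing of $\{h\ge k\}$-crossings should invoke the FKG inequality for $\phi$ (these are increasing events in $\phi$), not the $|\phi|$-FKG ``with $h\mapsto-h$ used to fix signs'': the $|\phi|$-FKG is blind to the sign of a crossing and cannot supply the correlation inequality needed there. None of this says the route is unreasonable --- duality and symmetry for a square, a level-$1$ to level-$k$ boost via nested circuits, and a renormalization with conditional domination is indeed the known strategy --- but the proposal stops short of the hard part, so it does not constitute a proof of the quoted theorem.
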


An immediate consequence to the Russo-Seymour-Welsh theorem is the positive probability of level loop in each annulus with aspect ratio $2$.  For $m<n$ we denote by $A_{m,n}:= D_n \setminus D_m$.  Let $D$ be an even domain containing $D_{2n}$,  and $\mathcal G_n$ be the event that for the square ice height function in $D$ with zero boundary condition,   there is a $|h|=2$ level $\times$-loop contained in $A_{n,2n}$.  

\begin{corollary}\label{c.RSW}
For every $\eps>0$,  there exists $c>0$,  such that for every even domain $D$ containing $D_{2n}$,
\begin{align*}
c &\leq \P_{D}^0(\mathcal G_n) \leq 1-c.
\end{align*}
\end{corollary}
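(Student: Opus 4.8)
The plan is to derive \cref{RSW} from the Russo-Seymour-Welsh theorem above by a standard gluing argument. The annulus $A_{n,2n}$ can be covered by a bounded number (independent of $n$) of overlapping rectangles of the form $x+\Lambda_{\rho n,n}$ (for a suitable fixed aspect ratio parameter $\rho$, say $\rho = 4$) placed along the four sides of the annulus, together with their $90^\circ$ rotations, so that a crossing of the long direction of each of these rectangles, glued together along their overlaps, produces a $\times$-loop disconnecting $D_n$ from $\partial D_{2n}$ inside $A_{n,2n}$. First I would fix such a finite collection of rectangles $Q_1,\dots,Q_M$ and overlap squares, and observe that each $Q_j$ and each overlap square sits inside $D_{2n}\subset D$ at distance at least $\eps n$ from $\partial D$ for a fixed $\eps>0$ depending only on the geometry of the cover, so the hypotheses of the RSW theorem (with $R$ chosen so that $D_{2n}\subset D_{Rn}$) apply uniformly.

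Next I would pass from crossings ``with height $\ge k$'' to crossings of a level line $|h|=2$. By the RSW theorem applied with $k=2$, in each $Q_j$ the event of a path with $h\ge 2$ crossing the long direction, and separately the event of a path with $h\le -2$ crossing, each has probability at least $c$; when a long crossing with $h\ge 2$ and a short crossing (in the transverse direction) with $h\le 0$ — equivalently, by symmetry $h\le -2$ is not needed, one uses that the height changes by $\pm1$ across adjacent vertices — coexist, there must be a $\times$-path on which $h$ takes the value exactly $2$ (or $-2$) separating them; this is exactly how the $\H^\times_{h=k}$ events in the second line of the RSW theorem arise, so in fact one can directly invoke $\P_D^0(\H^\times_{h=2}(Q_j))\ge c$ and its rotation. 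The key probabilistic tool for the gluing is the FKG inequality for $|h|$: the events ``there is a $\times$-path with $|h|=2$ crossing $Q_j$ in the long direction'' together with the events forcing these crossings to connect up inside the overlap squares are all increasing events for $|h|$ under the standard monotone coupling, so their intersection has probability at least $\prod$ of the individual probabilities, which is a positive constant $c^{(M)}$ depending only on $\eps$ (through $M$ and the RSW constant). On this intersection there is a $|h|=2$ level $\times$-loop contained in $A_{n,2n}$, giving the lower bound $\P_D^0(\mathcal G_n)\ge c$. The upper bound $\P_D^0(\mathcal G_n)\le 1-c$ follows from the same RSW estimates applied to a radial crossing: with probability at least some $c'>0$ there is a path from $D_n$ to $\partial D_{2n}$ on which $h\ge 2$ throughout (or on which $h$ is constant), and such a path cannot be crossed by a $|h|=2$ $\times$-loop, so $\mathcal G_n$ fails with positive probability.

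I expect the main obstacle to be purely bookkeeping rather than conceptual: one must choose the explicit finite family of rectangles and overlap squares covering $A_{n,2n}$, check that a compatible system of long crossings and overlap connections really does topologically enclose $D_n$ within $A_{n,2n}$ (paying attention to the parity/color constraints, since $\times$-paths live on a fixed sublattice and the loop must be a $\times$-circuit of the correct parity), and verify that all these events are simultaneously increasing for $|h|$ so that FKG applies in one shot. The height-arithmetic step — extracting an exact level $\times$-loop $|h|=2$ from crossings at height $\ge 2$ — is also something to be done carefully, but it is already packaged in the $\H^\times_{h=k}$ form of the RSW theorem, so invoking that version directly avoids most of the work. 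None of these steps requires new estimates beyond the RSW theorem, FKG for $|h|$, and the monotone coupling already recalled in this section.
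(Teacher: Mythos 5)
The paper itself offers no written proof of \cref{RSW} (it is asserted as an immediate consequence of the RSW theorem), so your proposal must be judged on its own. The overall architecture -- cover $A_{n,2n}$ by finitely many rectangles, produce crossings in each by RSW, glue them into a circuit, and block the loop by a radial barrier for the upper bound -- is the right one, but two of your key steps fail as stated. First, the gluing step: you invoke the FKG inequality for $|\phi|$ (Proposition \ref{p.absfkg}) applied to the events ``there is a $\times$-path with $|h|=2$ crossing $Q_j$''. These exact-level events are \emph{not} increasing in $|h|$ (nor in $h$): raising $|h|$ can destroy an exact-level-$2$ crossing. So the ``one-shot'' FKG gluing does not apply, and this is not mere bookkeeping -- it is precisely the content hidden behind the word ``immediate''. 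The standard repair is to glue crossings of the form $\{|h|\ge 2\}$, which \emph{are} increasing in $|h|$ (and have constant sign along any $\times$-connected set, since a homomorphism cannot jump from $+2$ to $-2$ across an edge or diagonal), obtaining a circuit with $|h|\ge 2$ in, say, the inner half of the annulus; one must then convert this into an exact $|h|=2$ level loop \emph{contained in} $A_{n,2n}$, which requires an additional ingredient (e.g.\ a low circuit with $|h|\le 1$ in the outer half of the annulus trapping the level line between the two circuits, or a conditional/Markovian gluing of exact-level lines), because the level loop separating the $|h|\ge2$ circuit from $\partial D$ (where $h=0$) could a priori wander outside the annulus when $D$ is much larger than $D_{2n}$.

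Second, your upper bound is based on a false blocking statement: a radial path on which $h\ge 2$ throughout does \emph{not} exclude a $|h|=2$ level $\times$-loop, since the loop may simply pass through the vertices of that path at which $h=2$; and an ordinary (nearest-neighbour) path can never have constant height, so the parenthetical alternative is vacuous. The correct barrier is a radial ordinary path from $\partial D_n$ to $\partial D_{2n}$ on which $|h|\le 1$: any $\times$-circuit contained in $A_{n,2n}$ and surrounding $D_n$ must share a vertex with such a path (a $\times$-circuit disconnects inside from outside for ordinary connectivity), which is impossible if the circuit carries $|h|=2$. The existence of such a low radial path with probability bounded below follows from the complementary (duality) form of the RSW upper bounds, not from the high-crossing lower bounds you cite. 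With these two repairs -- gluing $|h|\ge2$ crossings via FKG for $|h|$ plus an argument localizing an exact level loop in the annulus, and a low radial barrier for the upper bound -- your outline becomes a correct proof.
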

In fact,  we will apply a generalization of this corollary,  stated as Lemma \ref{l.loop} below.  

We also need positive association properties of the square ice.  For a given boundary set $B\subset D$ with $D$ an even domain and $\kappa: B\to \Z$,  define $\text{Hom}(D,B,\kappa)$ to be the set of homomorphisms $h :D\to \Z$ such that $h_v = \kappa_v$ for every $v\in B$.  We call $(B, \kappa)$ a boundary condition and say that the boundary condition is admissible if $\text{Hom}(D,B,\kappa) \neq \emptyset$  is finite.  We write $\P_D^{B, \kappa}$ for the uniform measure on $\text{Hom}(D,B,\kappa)$.

\begin{proposition}[monotonicity for $\phi$ \cite{DCHLR}]
Consider  an even domain $D \subset \Z^2$ and two admissible boundary conditions $(B, \kappa)$ and $(B, \kappa')$ 
satisfying that for every $v\in B$,  $\kappa_v\in [a_v, b_v]$ and $\kappa'_v\in [a'_v, b'_v]$ with $a_v\leq a'_v$ and $b_v\le b'_v$, then 
\begin{itemize}
\item For every increasing function $F$,  $\P_D^{B, \kappa'}[F(\phi)]\ge \P_D^{B, \kappa}[F(\phi)]$
\item For any two increasing functions  $F,G$,  $\P_D^{B, \kappa}[F(\phi)G(\phi)]\ge  \P_D^{B, \kappa}[F(\phi)]\P_D^{B, \kappa}[G(\phi)]$
\end{itemize}
\end{proposition}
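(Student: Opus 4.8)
The plan is to exhibit the relevant set of homomorphisms as a finite distributive lattice under the coordinatewise order, and then to invoke the FKG and Holley inequalities. Throughout we read the boundary data as the constraint $\phi_v\in[a_v,b_v]$ for $v\in B$ (the case spelled out in the definition above being $a_v=b_v=\kappa_v$), and we use our standing convention that every homomorphism height function considered takes even values on the even sublattice and odd values on the odd sublattice, so that any two of them have equal parity at every vertex.

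First I would record an elementary characterisation: a map $h\colon D\to\Z$ that is $1$-Lipschitz along the edges of $D$ and has the parity just described is automatically a homomorphism, since two integers of opposite parity differing by at most $1$ differ by exactly $1$; the converse is trivial. Hence, writing $S_\kappa:=\{h\colon D\to\Z\ \text{homomorphism}\ :\ h_v\in[a_v,b_v]\ \forall v\in B\}$, membership in $S_\kappa$ is the conjunction of three conditions — the Lipschitz bound, the parity, and the interval membership on $B$ — each of which is preserved when two members $h,h'$ are replaced by their coordinatewise minimum $h\wedge h'$ or maximum $h\vee h'$: the Lipschitz bound by a one-line computation, the parity because the minimum or maximum of two integers of a given parity has that parity, and the interval membership trivially. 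Thus $S_\kappa$ is a sublattice of the distributive lattice $(\Z^D,\le)$, hence itself a finite (by admissibility) distributive lattice, and likewise for $S_{\kappa'}$.

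The second bullet is then immediate: the uniform measure $\P_D^{B,\kappa}$ on the finite distributive lattice $S_\kappa$ satisfies the FKG lattice condition (indeed with equality wherever the left side is positive, since $h,h'\in S_\kappa$ forces $h\wedge h',h\vee h'\in S_\kappa$, and trivially otherwise), so the FKG inequality gives $\P_D^{B,\kappa}[F(\phi)G(\phi)]\ge \P_D^{B,\kappa}[F(\phi)]\,\P_D^{B,\kappa}[G(\phi)]$ for increasing $F,G$. For the first bullet I would check Holley's criterion for the pair $(\P_D^{B,\kappa},\P_D^{B,\kappa'})$: given $h\in S_\kappa$ and $h'\in S_{\kappa'}$, the functions $h\wedge h'$ and $h\vee h'$ are again $1$-Lipschitz with the correct parity, and on $B$ one has $(h\wedge h')_v=\min(h_v,h'_v)\in[a_v,b_v]$ and $(h\vee h')_v=\max(h_v,h'_v)\in[a'_v,b'_v]$ — this is exactly where $a_v\le a'_v$ and $b_v\le b'_v$ enter, through $h_v\ge a_v$, $h'_v\ge a'_v\ge a_v$, $h_v\le b_v\le b'_v$, $h'_v\le b'_v$. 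Hence $h\wedge h'\in S_\kappa$ and $h\vee h'\in S_{\kappa'}$, so $\P_D^{B,\kappa'}(h\vee h')\,\P_D^{B,\kappa}(h\wedge h')=|S_\kappa|^{-1}|S_{\kappa'}|^{-1}=\P_D^{B,\kappa}(h)\,\P_D^{B,\kappa'}(h')$, both sides vanishing unless $h\in S_\kappa$ and $h'\in S_{\kappa'}$. Holley's inequality then yields that $\P_D^{B,\kappa'}$ stochastically dominates $\P_D^{B,\kappa}$, i.e. $\P_D^{B,\kappa'}[F(\phi)]\ge\P_D^{B,\kappa}[F(\phi)]$ for every increasing $F$; Strassen's theorem moreover produces the monotone coupling used elsewhere in the paper.

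The only genuine subtlety is the reduction in the second paragraph: homomorphisms are \emph{not} closed under coordinatewise $\min$ and $\max$ as integer-valued functions in general, and it is the parity convention that rescues this by pinning the two competing values to the same residue class. Once the Lipschitz-plus-parity description is in place, everything else is the standard distributive-lattice FKG/Holley machinery, and the interval form of the boundary data — far from being a complication — is precisely what makes the Holley sublattice condition hold simultaneously for the two measures.
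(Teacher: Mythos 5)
Your proof is correct: the Lipschitz-plus-parity description of homomorphisms makes the constrained configuration sets sublattices of $\Z^D$, and the FKG lattice condition and Holley criterion you verify are exactly the standard route to positive association and stochastic domination. The paper itself imports this proposition from \cite{DCHLR} without proof, and your argument is essentially the one given there, so there is nothing to reconcile.
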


We say that the boundary condition $(B, \kappa)$ is $|h|$-adapted,  if there exists a partition $B_{\text{pos}} (\kappa) \cup B_{\text{sym}} (\kappa) $ of $B$,  such that 
\begin{itemize}
\item  for every $v \in B_{\text{pos}} (\kappa)$,  $\kappa_v \in \{0, 1, 2, \cdots\}$;
\item for every $v \in B_{\text{sym}} (\kappa)$,  $\kappa_v = - \kappa_v$.
\end{itemize}

\begin{proposition}[monotonicity for $|\phi|$ \cite{DCHLR}]\label{p.absfkg}
Consider $D \subset \Z^2$ and two admissible boundary conditions $(B, \kappa)$ and $(B, \kappa')$ 
satisfying $B_{\text{pos}} (\kappa) \subset B_{\text{pos}} (\kappa') $  and for every $v\in B$,  $[a_v, b_v]:= \kappa_v \cap \Z_+$ and $[a'_v, b'_v]:= \kappa'_v \cap \Z_+$ satisfy $a_v\leq a'_v$ and $b_v\le b'_v$, then 
\begin{itemize}
\item For every increasing function $F$,  $\P_D^{B, \kappa'}[F(|\phi|)]\ge \P_D^{B, \kappa}[F(|\phi|)]$
\item For any two increasing functions  $F,G$,  $\P_D^{B, \kappa}[F(|\phi|)G(|\phi|)]\ge  \P_D^{B, \kappa}[F(|\phi|)]\P_D^{B, \kappa}[G(|\phi|)]$
\end{itemize}
\end{proposition}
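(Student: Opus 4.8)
The plan is to recognize $|\phi|$ as a genuine height function model and then to run, with one extra combinatorial ingredient, the argument that proves the unweighted ``monotonicity for $\phi$'' proposition. The starting point is that the folding map $\iota\colon\Z\to\Z_+$, $\iota(k)=|k|$, is a graph homomorphism onto the half-line graph $\Z_+$ (vertices $0,1,2,\dots$ with nearest-neighbour edges); hence for every homomorphism $\phi\colon D\to\Z$ the function $|\phi|=\iota\circ\phi$ is a homomorphism from $D$ into $\Z_+$. Consequently it suffices to study the law $\nu_\kappa$ of $|\phi|$ under $\P_D^{B,\kappa}$ on the set $\Omega_\kappa$ of $\Z_+$-valued homomorphisms $h$ on $D$ with $h_v\in\kappa_v\cap\Z_+$ for every $v\in B$, and to show that $\nu_\kappa$ is positively associated and that $\nu_{\kappa'}$ stochastically dominates $\nu_\kappa$ whenever $(B,\kappa)\preceq(B,\kappa')$ in the sense of the statement.

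I would next compute $\nu_\kappa$ explicitly. Given $h\in\Omega_\kappa$, a preimage $\phi$ under $|\cdot|$ amounts to a choice of sign for $\phi_v$ at each $v$ with $h_v\ge 1$; the homomorphism constraint forces this sign to be constant on each connected component of the subgraph induced by $\{v:h_v\ge 1\}$, and on an $|h|$-adapted boundary a component is pinned to the sign $+$ exactly when it meets $B_{\text{pos}}(\kappa)$ (meeting only $B_{\text{sym}}(\kappa)$ leaves the sign free). Hence $\#\{\phi:|\phi|=h\}=2^{c(h,\kappa)}$, where $c(h,\kappa)$ counts the components of $\{h\ge 1\}$ that avoid $B_{\text{pos}}(\kappa)$, so that $\nu_\kappa(h)\propto 2^{c(h,\kappa)}\mathds{1}[h\in\Omega_\kappa]$. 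Equivalently, $\phi$ is encoded by $h$ together with one independent uniform sign per free component --- the analogue here of the Edwards--Sokal coupling between the $q=2$ random-cluster and Ising models.

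With $\nu_\kappa$ in this form, both assertions follow by running the argument for the unweighted model with the single-site conditional laws replaced by their reweighted versions, equivalently by verifying Holley's criterion for a suitable monotone single-site dynamics. Because of the bipartite homomorphism structure the conditional law of $h_v$ given the rest of the configuration is supported on a two-point set $\{a,a+2\}$ (or is deterministic), and the comparison of the two conditional weights reduces, after summing out the hidden signs, to a ratio of factors $2^{c(\cdot,\kappa)}$: a pointwise increase of the configuration can only merge sign-components of $\{h\ge 1\}$ or pin more of them to $B_{\text{pos}}$, and since $2\ge 1$ this moves the ratio in the direction demanded by the FKG lattice condition --- precisely the mechanism that makes random-cluster measures with cluster weight at least one positively associated. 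The same local comparison between $\nu_\kappa$ and $\nu_{\kappa'}$ shows that enlarging $B_{\text{pos}}$ and shifting the admissible sets $\kappa_v\cap\Z_+$ upward only raises the boundary constraints and only deletes components from the free count, so $\nu_{\kappa'}$ dominates $\nu_\kappa$; Holley's coupling theorem then yields the monotone coupling used later in the paper, while positive association of each $\nu_\kappa$ gives the FKG inequality for $|\phi|$.

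The step I expect to be the main obstacle is exactly this last verification: carrying out the Holley-type computation for the reweighted, constrained measure $2^{c(h,\kappa)}\mathds{1}[h\in\Omega_\kappa]$. One must cope simultaneously with the fact that $\Omega_\kappa$ is not a sublattice of $\Z_+^{D}$ for the pointwise order --- the usual height-function subtlety, already present in the unweighted proposition and circumvented there by working with single-site updates rather than global meets and joins --- with the reweighting by the number of sign-components, which is the height-function incarnation of the $q\ge 1$ phenomenon from the random-cluster model, and with the bookkeeping of the two kinds of boundary vertices inside the component count. Once these are reconciled the two stated inequalities are immediate from Holley's theorem.
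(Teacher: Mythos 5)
The paper does not prove this proposition at all --- it is imported verbatim from [DCHLR] --- so the comparison is with that source. Your overall framework is in fact the same as the one used there: push $\P_D^{B,\kappa}$ forward under $\phi\mapsto|\phi|$, observe that the number of preimages of $h$ is $2^{c(h,\kappa)}$ with $c(h,\kappa)$ the number of connected components of $\{h\ge 1\}$ whose sign is not pinned by $B_{\text{pos}}(\kappa)$, and then verify a Holley/FKG-type condition for the reweighted measure $\nu_\kappa(h)\propto 2^{c(h,\kappa)}$. That part of your proposal is sound.

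The genuine gap is exactly at the step you defer: the Holley/FKG verification for the weights $2^{c(\cdot,\kappa)}$ is not a matter of ``merging or pinning components and $2\ge 1$''. What is actually needed is a component-counting inequality, e.g.\ in the symmetric case $c(h\vee h',\kappa)+c(h\wedge h',\kappa)\ge c(h,\kappa)+c(h',\kappa)$ (and an asymmetric version across $\kappa,\kappa'$ for the first bullet), and this inequality is false for arbitrary subsets of $\Z^2$: if two positive clusters of $h$ and $h'$ could be adjacent without intersecting, one could have $\{h\vee h'\ge1\}$ connected while $\{h\wedge h'\ge1\}=\emptyset$, violating it. The inequality holds here only because of a structural fact your argument never identifies or uses: $h$ and $h'$ take values of the same parity at each vertex, so a vertex of $\{h\ge1\}\setminus\{h'\ge1\}$ cannot be adjacent to a vertex of $\{h'\ge1\}\setminus\{h\ge1\}$ (it would force values $1$ and $0$ at the same vertex); adjacency of clusters then implies intersection, and a counting argument on the bipartite incidence graph of clusters gives the inequality. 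Without this input the analogy with the $q\ge1$ random-cluster mechanism is only an analogy. Two further points: your stated reason for retreating to single-site dynamics --- that $\Omega_\kappa$ is not a sublattice --- is incorrect; the set of same-parity homomorphisms with fixed (interval) boundary data is closed under pointwise $\vee$ and $\wedge$, and this is precisely what makes the direct lattice-condition route viable. And if you do insist on the single-site Holley/monotone-dynamics route, you take on additional obligations you have not discharged: irreducibility of the constrained single-site dynamics, the treatment of vertices where the update is forced or where the two coupled copies have different admissible two-point sets, and the cross-comparison of conditional laws under $\kappa$ versus $\kappa'$ (including checking that $h\vee h'$ respects $\kappa'$ and $h\wedge h'$ respects $\kappa$ under the hypotheses $a_v\le a'_v$, $b_v\le b'_v$, $B_{\text{pos}}(\kappa)\subset B_{\text{pos}}(\kappa')$).
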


We will only apply the above two propositions with $D$ being an even domain such that for some $n\in\N$,  $ [-n,n]^2\cap \Z^2 \subset D \subset [-n-1,n+1]^2\cap \Z^2$,  and $B = \partial D$  (namely,  $B \subset D$ is the set of vertices that has at least one neighbor in $\Z^2\setminus D$ ).  

Applying Corollary \ref{c.RSW} to the uniform square ice in $D_N$ with zero boundary condition,  we may construct a sequence of  level loops $\L_0, \L_1, \cdots$, of the height function, that are nested $\times$-loops,   such that $\L_0 = \partial D_N$,  and 
$$
\mathcal L_k = \text { outermost loop contained in } \L_{k-1} \text{ such that } \left|\phi_N|_{\mathcal L_k } - \phi_N|_{\mathcal L_{k-1} } \right| =2
$$

We recall the following estimate for the number of loops in an annuli,  stated in \cite{RS} (see also \cite{DCHLR}).

\begin{proposition}\label{l.loop}
There exist constants $C,c > 0 $ such that the following holds.  Let $k \ge 200$, let $a \ge 1$ and let $D$ be a domain.  Let $N$ be the number of level loops contained in 
$A_{k, 2^ak} \subset D$.  Then  $\P_D^0[N< ca] \le Ce^{-ca}$.  

Moreover,   the number of  level loops $N'$ intersecting $A_{k, 2^ak}$ satisfies 
$$
\P_D^0[N'> n]  \leq e^{-c n\log ( \frac na \wedge k)}
$$
for all $n >Ca$.
\end{proposition}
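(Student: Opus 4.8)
The plan is to derive both statements from Corollary~\ref{c.RSW}, or rather its consequence that in each dyadic annulus $A_{2^j k, 2^{j+1}k}$ there is a uniformly positive conditional probability of finding a $|h|=2$ level $\times$-loop, together with a recursive "peeling" argument that exploits the spatial Markov property of the uniform homomorphism height function and the monotonicity Proposition~\ref{p.absfkg}. For the first assertion, I would decompose $A_{k, 2^a k} = \bigcup_{j=0}^{a-1} A_{2^j k, 2^{j+1}k}$ into $a$ disjoint dyadic sub-annuli, each with aspect ratio $2$, and let $E_j$ be the event that $A_{2^j k, 2^{j+1}k}$ contains a $|h|=2$ level loop of the height function (so $N \ge \#\{j : E_j \text{ occurs}\}$). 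Revealing the height function from the boundary inward one annulus at a time, on the event that a level loop has been found in $A_{2^{j'}k, 2^{j'+1}k}$ for the outermost such $j'<j$, the conditional law of the height function inside the next innermost loop is again a uniform homomorphism height function with an $|h|$-adapted boundary condition of the same type; Corollary~\ref{c.RSW} (applied to each such inner domain) then gives $\P_D^0[E_j \mid \mathcal F_j] \ge c_0$ for a constant $c_0>0$ independent of $j$, where $\mathcal F_j$ is the relevant revealed $\sigma$-algebra. Hence $N$ stochastically dominates a $\mathrm{Binomial}(a, c_0)$ random variable, and a standard large-deviation bound (Chernoff) yields $\P_D^0[N < ca] \le C e^{-ca}$ with $c = c(c_0)$.

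For the second assertion one argues in the opposite direction: a large number $N'$ of level loops crossing $A_{k, 2^a k}$ forces many loops to be packed into annuli of aspect ratio close to $1$, where the RSW input says that {\it not} finding a level loop also has positive probability — equivalently, each "thin" shell of modulus bounded below by a constant fails to be crossed by a loop with positive probability, so too many loops is exponentially costly. More precisely, if $N' > n$ then the annulus $A_{k, 2^a k}$, whose logarithmic width is $\sim a$, must be subdivided into $\sim n$ shells each of which {\it is} crossed by a level loop; grouping these into $\Theta(n)$ essentially disjoint sub-annuli each of aspect ratio bounded by $\max(2, (2^a k / k)^{1/n}) = \max(2, 2^{a/n})$ and bounded below, and revealing loops from outside in as above, the probability that {\it every} one of these $\Theta(n)$ sub-annuli is crossed is at most $(1-c_1)^{\Theta(n)}$ for a constant $c_1>0$ whenever the aspect ratios are bounded. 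When $n > Ca$ the aspect ratios are indeed bounded (by $2$), but to get the sharper rate $e^{-cn\log(\frac na \wedge k)}$ one must also use that on the lattice a level $\times$-loop separating two vertices at distance $\le 1$ is impossible, so the shells cannot be thinner than one lattice spacing; this caps the number of disjoint shells at $\sim k$ and forces the "excess" loops to be distributed so that entropy considerations give the extra $\log(\frac na \wedge k)$ factor. I would obtain the RSW-type lower bound on the probability of {\it no} level loop crossing a bounded-aspect-ratio annulus directly from the upper bound in Corollary~\ref{c.RSW} (the $\le 1-c$ side) combined with Proposition~\ref{p.absfkg} to peel loops one at a time and multiply the conditional probabilities.

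The main obstacle, and where the argument requires genuine care rather than bookkeeping, is justifying the recursive conditioning: after revealing the outermost $|h|=2$ level loop $\mathcal L$ and the height function outside it, the conditional law inside $\mathcal L$ must be identified as $\P_{\mathrm{int}(\mathcal L)}^{\partial \mathcal L, \kappa}$ for an $|h|$-adapted boundary condition whose "positive" part one can control, so that Corollary~\ref{c.RSW} — which is stated for {\it even domains with zero boundary condition} — still applies after the translation $h \mapsto h - \phi|_{\mathcal L}$ (a constant $\pm 2$ on $\mathcal L$). The subtlety is that $\mathcal L$ is a random loop of both parities of vertices and irregular shape, so one needs the RSW estimate in the robust form of the Russo--Seymour--Welsh theorem quoted above (uniform over even domains $D \subset D_{Rn}$ with $\mathrm{dist}(\Lambda_{\rho n,n}, \partial D) \ge \eps n$), applied with $\Lambda_{\rho n, n}$ a fixed dyadic sub-annulus comfortably inside $\mathrm{int}(\mathcal L)$; a deterministic geometric lemma guaranteeing that such a $\Lambda$ exists once $\mathrm{int}(\mathcal L)$ contains a ball of the right radius completes the step. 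Once this conditioning scheme is in place the two tail bounds follow from the binomial domination (for $N$) and the product-of-failures estimate (for $N'$) as sketched, with the logarithmic entropy factor in the second bound coming from counting the ways to allocate $n$ loops among $\le k$ admissible shells.
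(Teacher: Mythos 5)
First, note that the paper does not prove Proposition~\ref{l.loop} at all: it is quoted from \cite{RS} (see also \cite{DCHLR}), so there is no internal proof to compare against. Judged on its own terms, the first half of your proposal (the lower tail $\P_D^0[N<ca]\le Ce^{-ca}$) follows the standard route that the cited works use: explore the nested level-loop chain from $\partial D$ inward, use the domain Markov property to get a constant boundary condition on the current loop, recenter $h\mapsto h-\phi|_{\mathcal L}$, and apply the RSW input (Corollary~\ref{c.RSW} in its uniform-over-domains form) scale by scale to dominate $N$ by a binomial; the caveats you raise (random shape of $\mathcal L$, needing the loop produced at one scale to actually be contained in the big annulus before moving to the next scale, possibly skipping alternate dyadic scales) are genuine but are bookkeeping, and this part is essentially correct in outline.

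The second half contains a real gap. Your core estimate --- reveal loops from outside in and multiply failure probabilities $(1-c_1)$ over $\Theta(n)$ bounded-aspect-ratio shells --- can never give more than $e^{-cn}$, and you acknowledge that the stated bound $e^{-cn\log(\frac na\wedge k)}$ needs more; but the mechanism you propose for the extra factor, ``entropy considerations'' from counting the ways to allocate $n$ loops among at most $k$ admissible shells, does not work: an entropy count of allocations enters an upper bound as an additional combinatorial \emph{factor} (worsening it), it cannot improve the exponential rate. The superexponential decay has to come from a probabilistic cost per \emph{extra} loop packed into the same dyadic shell: once $m$ nested level loops (with successive height offsets $\pm 2$) already cross a fixed annulus of aspect ratio $2$, the conditional probability that yet another chain loop crosses it, rather than closing up inside, must be shown to decay in $m$ (equivalently, one must exploit that $m$ crossings force height values of order $m$ in a region of bounded scale, via the pointwise tail/height-gradient estimates, with the lattice spacing giving the cap $\wedge k$). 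Your sketch supplies no such per-loop decaying estimate, so as written it proves only $\P_D^0[N'>n]\le e^{-cn}$ for $n>Ca$, not the stated bound. If you want the full statement you should either import the packing estimate from \cite{RS} or prove a lemma of the form: conditionally on the outermost $m$ crossing loops, the probability of an $(m+1)$-st crossing of the same dyadic shell is at most $C/(m\wedge k)^{c}$ (or any bound summing to $\exp(-cn\log(\frac na\wedge k))$ after multiplying over loops), and only then combine it with the shell decomposition you describe.
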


\bigskip

\section{Proof of Theorem \ref{t.clt}} \label{s.clt}
We prove Theorem \ref{t.clt} by constructing a martingale decomposition and applying a martingale difference CLT.  We start by defining the geometric scales and the relevant martingale differences.  For $k = 1, \cdots \log N$,  let $r_k = 2^{-k} N$,  and denote by $B_{k}(x)$ an even domain centered at $x$ and is nested between two boxes $[x-r_k , x+r_k]^2 \cap \Z^2 \subset B_k(x) \subset [x-r_k-1 , x+r_k+1]^2 \cap \Z^2$.  When $x=0$ we simply denote $B_{k}(0)$ by $B_k$,  Also denote by $A_{r_k,r_l}(x): = B_l(x) \setminus B_k(x)$, and  $A_{r_k,r_l}:= A_{r_k,r_l}(0)$. We define a sequence of level loops $\L_0, \L_1, \cdots$, of the height function  such that $\L_0 = \partial D_N$,  and 
$$
\mathcal L_k = \text { outermost loop contained in } \L_{k-1} \text{ such that } \left|\phi_N|_{\mathcal L_k } - \phi_N|_{\mathcal L_{k-1} } \right| =2.
$$

 We also define a sequence of level circuit $\{ \mathcal C_k\}_{k=1}^{\log N}$ of the height function,  such that 
$$
\mathcal C_k = \text { outermost level loop } (\L_j)_{j\in\N} \text{ contained in } B_{k}. 
$$
We may view $\mathcal C_k$ as evaluating $(\L_k)_{k\in \N}$ at random times.  Notice that $\mathcal C_k$ can be defined via a standard exploration process of the height function from $\partial B_k$.  

Given a domain $D\subset \Z^2$,  let $\text{int} (D) := D\setminus \partial D$,  and we define $\mathcal F_k = \sigma \{ \phi(x),  x\notin \text{int }{ \mathcal C_k}  \}$.   Decompose $\phi_N(0)$ into a telescoping sum:
\begin{equation*}
\phi_N(0) = \sum_{k=1}^{\log N} \E[\phi_N(0)|\mathcal F_k ] - \E[\phi_N(0)|\mathcal F_{k-1} ]
\end{equation*}
and denote by the martingale difference $\Delta_k :=  \E[\phi_N(0)|\mathcal F_k ] - \E[\phi_N(0)|\mathcal F_{k-1} ]$.  We notice that $\Delta_k \in 2\Z$. 

We recall the following martingale difference CLT stated in \cite{He}

\begin{theorem} \label{t.martclt}
Let $X_k$ be a martingale difference array satisfying 
\begin{itemize}
%
\item $\E\left[\max_{k \leq n} |X_k| \right] \to 0$
\item $ \sum_{k=1}^n  |X_k|^2 \to 1$ in probability,
\end{itemize}

then $S_n := \sum_{k=1}^n  X_k$ converges in distribution to a standard normal.
\end{theorem}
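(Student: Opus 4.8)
Although Theorem~\ref{t.martclt} is quoted from \cite{He}, for completeness we indicate the standard argument (McLeish's method), which is the proof we would write. The plan is to establish the convergence of characteristic functions $\E[\exp(\mathrm{i}tS_n)]\to\exp(-t^2/2)$ for every fixed $t\in\R$ and then invoke L\'evy's continuity theorem. Let $(\mathcal{F}_{n,k})_{0\le k\le n}$ be the filtration of the $n$-th row, so that $\E[X_k\mid\mathcal{F}_{n,k-1}]=0$ and $X_1,\dots,X_k$ are $\mathcal{F}_{n,k}$-measurable. The basic object is the complex random variable $T_n=T_n(t):=\prod_{k=1}^n(1+\mathrm{i}tX_k)$. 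Conditioning successively on $\mathcal{F}_{n,n-1},\mathcal{F}_{n,n-2},\dots$ and using the martingale difference property telescopes the product, giving the identity $\E[T_n]=1$ for all $n$ and $t$.

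The second step is the elementary pointwise identity $e^{\mathrm{i}x}=(1+\mathrm{i}x)\exp(-\tfrac12 x^2+\rho(x))$, valid for all real $x$, where $\operatorname{Re}\rho(x)=\tfrac12x^2-\tfrac12\log(1+x^2)\ge0$ and $|\rho(x)|\le|x|^3$ for $|x|\le\tfrac12$. Taking the product over $k=1,\dots,n$ yields, \emph{identically}, $e^{\mathrm{i}tS_n}=T_n\,U_n$ with $U_n:=\exp(-\tfrac{t^2}{2}\sum_kX_k^2+\sum_k\rho(tX_k))$, and $|U_n|=\prod_k(1+t^2X_k^2)^{-1/2}\le1$. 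On the event $\{\max_{k\le n}|tX_k|\le\tfrac12\}$, whose probability tends to $1$ because $\E[\max_{k\le n}|X_k|]\to0$, the cubic bound gives $|\sum_k\rho(tX_k)|\le|t|^3(\max_{k\le n}|X_k|)\sum_kX_k^2$, which tends to $0$ in probability; combined with $\sum_kX_k^2\to1$ in probability this yields $U_n\to e^{-t^2/2}$ in probability, with $|U_n-e^{-t^2/2}|$ bounded. Since $\E[T_n]=1$ we get
\[
\E[e^{\mathrm{i}tS_n}]-e^{-t^2/2}=\E\!\left[T_n\big(U_n-e^{-t^2/2}\big)\right],
\]
and splitting this expectation according to whether $|U_n-e^{-t^2/2}|$ is small or not shows it tends to $0$ as soon as the family $\{T_n(t)\}_n$ is uniformly integrable. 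That uniform integrability is the one nontrivial input.

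To obtain it one truncates the array. Let $\tau_n:=\min\{k:\ \sum_{j\le k}X_j^2>2\ \text{or}\ \max_{j\le k}|X_j|>1\}$; both defining events are $\mathcal{F}_{n,k}$-measurable, so $\tau_n$ is a stopping time with $\{\tau_n\ge k\}\in\mathcal{F}_{n,k-1}$, and hence $\widetilde X_k:=X_k\1_{\{\tau_n\ge k\}}$ is again a martingale difference array; set $\widetilde S_n:=\sum_k\widetilde X_k$ and $\widetilde T_n:=\prod_k(1+\mathrm{i}t\widetilde X_k)$. By construction $|X_k|\le1$ and $\sum_{j\le k}X_j^2\le2$ for $k<\tau_n$, so in $\prod_k(1+t^2\widetilde X_k^2)$ every factor except possibly the one at $k=\tau_n$ is at most $e^{t^2X_k^2}$ with $\sum_{k<\tau_n}X_k^2\le2$, while the exceptional factor is at most $1+t^2\max_{k\le n}X_k^2$; therefore
\[
\E\big[|\widetilde T_n(t)|^2\big]=\E\Big[\textstyle\prod_k(1+t^2\widetilde X_k^2)\Big]\le e^{2t^2}\Big(1+t^2\,\E\big[\max_{k\le n}X_k^2\big]\Big),
\]
which is bounded in $n$ — this is where the boundedness of $\E[\max_{k\le n}X_k^2]$ enters, which in the applications of Theorem~\ref{t.martclt} in this paper holds because $\max_k|X_k|$ is bounded by a deterministic null sequence of order $(\log N)^{-1/2}$ — so $\{\widetilde T_n(t)\}_n$ is $L^2$-bounded, hence uniformly integrable. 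Moreover $\P(\tau_n\le n)\le\P(\sum_kX_k^2>2)+\P(\max_k|X_k|>1)\to0$, and on $\{\tau_n>n\}$ one has $\widetilde X_k=X_k$ for all $k\le n$, so $\widetilde T_n=T_n$ and $\widetilde S_n=S_n$ there. Running the argument of the previous paragraph for the truncated array (which inherits both hypotheses and, in addition, uniform integrability of its $T_n$) gives $\E[e^{\mathrm{i}t\widetilde S_n}]\to e^{-t^2/2}$, and since $\widetilde S_n=S_n$ with probability tending to $1$, the same limit holds for $S_n$; L\'evy's theorem then finishes. The main obstacle is precisely this last part — controlling the truncated product in $L^2$ and checking that the stopping is asymptotically invisible — while the algebraic core (the factorization $e^{\mathrm{i}tS_n}=T_nU_n$ and $\E[T_n]=1$) is soft.
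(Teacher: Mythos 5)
The paper does not actually prove Theorem~\ref{t.martclt}: it is quoted from \cite{He}, so there is no internal argument to compare with. Your reconstruction is the standard McLeish product-martingale argument that underlies the cited result --- the identity $\E[T_n]=1$ for $T_n=\prod_k(1+\mathrm{i}tX_k)$, the exact factorization $e^{\mathrm{i}tS_n}=T_nU_n$ with $|U_n|\le 1$, the stopping-time truncation, and uniform integrability of the stopped product --- and every individual step you carry out is sound.

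There is, however, one genuine defect: as written, your argument does not prove the theorem as stated. The uniform-integrability step imports the extra hypothesis $\sup_n\E\bigl[\max_{k\le n}X_k^2\bigr]<\infty$, which is not among the assumptions, and you justify it by asserting that in this paper $\max_k|X_k|$ is bounded by a deterministic null sequence of order $(\log N)^{-1/2}$. That assertion is false: the increments $\Delta_k$ are not deterministically bounded; they are controlled only through the stretched-exponential tail bounds of Proposition~\ref{l.loop}, which is exactly why the paper bounds $\E[\max_{k\le \log N}|\Delta_k|]$ by $C\log\log N$ in \eqref{e.cond1} rather than by a constant. (The extra moment condition does in fact hold in the application, by those same tail bounds, but not for the reason you give.) More importantly, the detour is unnecessary: since $|\widetilde T_n|\le e^{t^2}\bigl(1+t^2\max_{k\le n}X_k^2\bigr)^{1/2}\le e^{t^2}\bigl(1+|t|\max_{k\le n}|X_k|\bigr)$, and the first hypothesis says precisely that $\max_{k\le n}|X_k|\to 0$ in $L^1$, the stopped products are dominated by a constant plus an $L^1$-convergent (hence uniformly integrable) sequence, so $\{\widetilde T_n(t)\}_n$ is uniformly integrable with no $L^2$ control of the maximum at all. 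Replacing your $L^2$ estimate by this $L^1$ bound removes the extraneous hypothesis and turns your text into a correct proof of the theorem exactly as stated.
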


We  will apply the martingale difference CLT with $X_k := \frac {\Delta_k}{\left(\sum_{k=1}^{\log N}\E[ \Delta_k^2]\right)^\frac12} $. This implies Theorem \ref{t.clt} with $\sigma_N:=\left( \sum_{k=1}^{\log N}\E[ \Delta_k^2]\right)^\frac12$.   

The first assumption in Theorem \ref{t.martclt} follows from the tail estimate for $\Delta_k$, which is a consequence of Proposition \ref{l.loop}.  Apply Proposition \ref{l.loop},  we conclude that for all $k < \log N -200$,  we have $\P_{D_N}^0[\Delta_k> n]\le e^{-c n\log ( n \wedge 2^k)}$.  A union bound (for $\log N$ random variables with exponential tails) thus implies there is some $C<\infty$, such that 
\begin{equation} \label{e.cond1}
\E[\max_{k \leq \log N} |\Delta_k| ]\leq  \E[\max_{k \leq \log N-200} |\Delta_k| ]+ \E[\max_{\log N-200 \leq k \leq \log N} |\Delta_k| ]\leq C \log \log N.
\end{equation}
Another application of Proposition \ref{l.loop} yields $c_1\le \E[ \Delta_k^2] \le c_2$ for some $c_1, c_2\in (0,\infty)$.  Therefore $c_1\log N \le \sum_{k=1}^{\log N}\E[ \Delta_k^2] \le c_2 \log N$,  which implies $\E\left[\max_{k \leq n} |X_k| \right] \to 0$.

  It suffices to check the second assumption in Theorem \ref{t.martclt} .  Since $c_1\log N \le \sum_{k=1}^{\log N}\E[ \Delta_k^2] \le c_2 \log N$, it suffices to show 
\begin{equation}\label{e.cond2}
\frac{1}{\log N}  \sum_{k=1}^{\log N} [\Delta_k^2 -\E[\Delta_k^2]] \to 0 
\end{equation}
in probability. 


Let $N(A_{r_{k+ \log\log N}, r_k})$ be the number of level loops contained in $A_{r_{k+ \log\log N}, r_k}$.  Consider the truncated random variable $\tilde\Delta_k := \Delta_k 1_{N(A_{r_{k+ \log\log N}, r_k})\ge 1}$,  which is measurable with respect to the $\sigma$-algebra $\F_{k+\log\log N}$,  and notice that whenever $l-k > \log\log N$,  $\tilde\Delta_k$ only depends on the information in $\sigma \{ \phi(x),  x\notin \text{int }{ \mathcal C_l}  \}$.  We claim it suffices to show 

\begin{equation}\label{e.tilde}
\frac{1}{\log N}  \sum_{k=1}^{\log N} [\tilde \Delta_k^2 - \E[\tilde \Delta_k^2]] \to 0 
\end{equation}
in probability.   Indeed,   it  follows from Proposition \ref{l.loop} that for there exist $c>0$ and $C<\infty$,  such that for all $k \leq \log N-\log\log N-200$,  $\P_{D_N}^0[N(A_{r_{k+ \log\log N}, r_k}) =0 ]
\le  C( \log N)^{-c} $.  Thus
\begin{multline*}
\frac{1}{\log N} \sum_{k=1}^{\log N} \E_{D_N}^0[ |\Delta_k^2 - \tilde \Delta_k^2|]
\leq 
\frac{1}{\log N} \sum_{k=1}^{\log N} \E_{D_N}^0[ \Delta_k^4]^\frac 12 \P_{D_N}^0[N(A_{r_{k+ \log\log N}, r_k}) =0 ]^\frac 12
\leq
C (\log N)^{-c/2}.
\end{multline*}

The weak law of large numbers \eqref{e.tilde} follows from the following decoupling estimate.

\begin{lemma}\label{l.decouple}
There exist $C, K_0<\infty$,  and $\eps \in (0, \frac 14)$,  such that for every $k -l > \log \log N +1$,  and $\log N -k >K_0$,  we have 
\begin{equation*}
\cov [\tilde \Delta_k^2,  \tilde \Delta_l^2] \leq 
\frac C {(k-l - \log\log N)^{\frac 12 -\eps}}
\end{equation*}
\end{lemma}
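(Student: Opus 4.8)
The plan is to exploit the fact that $\tilde\Delta_k^2$ is measurable with respect to the information outside $\mathcal C_{k-\log\log N}$, so that $\tilde\Delta_k^2$ and $\tilde\Delta_l^2$ are separated by many annular scales: the loops $\mathcal L_j$ living in the annulus $A_{r_{l}, r_{k-\log\log N}}$ form a ``buffer'' whose width in dyadic scales is $k-l-\log\log N$. The key structural point is that, conditionally on the outermost level loop $\mathcal C$ of $(\mathcal L_j)$ contained in a given box $B_m$ together with all the data outside it, the height function inside is a uniform homomorphism height function with that loop's (symmetric, up to $\pm 2$) boundary value; and the monotone coupling of Proposition \ref{p.absfkg} lets us compare the law inside to reference laws with frozen symmetric boundary data. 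Thus I would first condition on $\mathcal F_{l+\log\log N}$ (which determines $\tilde\Delta_l^2$), and then estimate how much the conditional law of $\tilde\Delta_k^2$ — really the conditional law of $\phi_N(0)$ restricted to the inner region — still depends on that conditioning.

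First I would set up the geometry: let $j^*$ be the index with $\mathcal C_{l+1} = \mathcal L_{j^*}$ roughly, and observe that between scale $l+\log\log N$ and scale $k-\log\log N$ there are, by Proposition \ref{l.loop}, with overwhelming probability at least $c(k-l-2\log\log N)$ nested level loops of $(\mathcal L_j)$, on each of which $\phi_N$ differs from its neighbor by exactly $\pm 2$. Second, I would run the standard exploration that reveals $\mathcal C_m$ for the relevant $m$, and use the tower property: $\cov[\tilde\Delta_k^2,\tilde\Delta_l^2] = \E\big[\tilde\Delta_l^2\,(\E[\tilde\Delta_k^2 \mid \mathcal F_{l+\log\log N}] - \E[\tilde\Delta_k^2])\big]$. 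Since $\tilde\Delta_l^2$ is bounded in $L^2$ by the tail estimate already invoked in the text (the $\E[\Delta_k^4]$ bound from Proposition \ref{l.loop}), it suffices to bound the $L^1$ (or $L^2$) fluctuation of $\E[\tilde\Delta_k^2 \mid \mathcal F_{l+\log\log N}]$ around its mean by $C(k-l-\log\log N)^{-1/2+\eps}$. Third — the analytic heart — I would show this conditional expectation depends weakly on the boundary data on $\mathcal C_{l+\log\log N}$: by Proposition \ref{p.absfkg} the conditional law of $|\phi_N|$ inside is monotone in that boundary data, and the number of level loops crossed from $\mathcal C_{l+\log\log N}$ inward to $\mathcal C_{k-\log\log N}$ acts as a random-walk-type ``mixing time.'' Concretely, conditioned on the outer data, the value $\phi_N|_{\mathcal C_m}$ performs a process that changes by $\pm 2$ at each successive loop and is (after comparison via the monotone coupling with a simple random walk stopped at a geometric number of steps) sufficiently spread out that, by the time we reach scale $k-\log\log N$, the distribution of the inner field — hence of $\tilde\Delta_k^2$ — has lost memory of the starting boundary value at rate governed by a local CLT for that $\pm 2$ process over $\Theta(k-l-\log\log N)$ steps, producing the $(k-l-\log\log N)^{-1/2+\eps}$ decay (the $\eps$ absorbing the fact that the number of loops is itself random and only controlled up to the tail bounds of Proposition \ref{l.loop}).

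The main obstacle I anticipate is the third step: turning the qualitative ``many level loops separate the two regions'' into a quantitative decoupling rate. The difficulty is that the successive boundary values $\phi_N|_{\mathcal C_m}$ do not form an exact Markov chain or random walk — the geometry of each loop, and the number of $(\mathcal L_j)$-loops between consecutive $\mathcal C_m$'s, is itself random and correlated with the heights — so one cannot directly invoke a local CLT. The way around this is to use the monotone coupling (Proposition \ref{p.absfkg}) to sandwich the true conditional law between two reference laws whose boundary data differ by a controlled amount, reduce the dependence to a monotone function of a single integer-valued boundary height, and then control that via a genuine random walk coupled to the level-loop process, using Corollary \ref{c.RSW}/Proposition \ref{l.loop} to guarantee the walk makes $\Theta(k-l-\log\log N)$ genuine $\pm 2$ steps with good probability. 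One then pays a small polynomial loss (the $\eps$) for the large-deviation events where the loop count is atypical, controlled again by Proposition \ref{l.loop}, and for the $L^2$ boundedness of $\tilde\Delta_l^2$. Assembling these pieces gives the stated covariance bound, and summing it as in \eqref{e.tilde} yields a variance of order $(\log N)^{-\eps}\to 0$, establishing the weak law of large numbers \eqref{e.cond2}.
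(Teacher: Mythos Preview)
Your proposal is on the right track and assembles the same ingredients as the paper --- conditioning via the tower property, the monotone coupling from Proposition~\ref{p.absfkg}, the random-walk structure of the heights on level loops, and the tail bounds of Proposition~\ref{l.loop}. But you are circling around, rather than landing on, two clean observations that make the argument sharp and that your write-up leaves as ``obstacles.''

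First, the heights $\phi_N|_{\mathcal L_j}$ on the nested level loops form an \emph{exact} simple random walk in $2\Z$: conditional on $\mathcal L_0,\ldots,\mathcal L_j$ and the field outside, the next increment is $\pm 2$ with equal probability by the $\pm$ symmetry of the model. Your stated obstacle --- that ``$\phi_N|_{\mathcal C_m}$ do not form an exact Markov chain or random walk'' --- is self-inflicted, coming from working with the coarse-grained loops $\mathcal C_m$ rather than with the $\mathcal L_j$. The paper works directly with the $\mathcal L_j$ and controls how many of them lie in the buffer annulus via Proposition~\ref{l.loop}.

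Second, the monotone coupling is not a sandwich bound or a local-CLT memory-loss estimate. Since $\tilde\Delta_k^2$ is invariant under global height shifts, the conditional law given the innermost level loop $\mathcal C_{l+}$ outside $B_{l^*}$ (with $l^*:=l+\log\log N$) is that under $\P_{\mathcal C_{l+}}^0$, and the paper couples this to the reference law $\P_{B_{l^*}}^0$. The coupling succeeds \emph{exactly} as soon as the dominating field $h_{\mathcal C_{l+}}^0$ exhibits a height-$0$ level loop $\mathcal L^*$ inside $A_{r_k,r_{l^*}}$: on $\mathcal L^*$ the pointwise domination $|h_{\mathcal C_{l+}}^0|\ge |h_{l^*}^0|$ forces both fields to vanish, and domain Markov then couples them identically inside. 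Hence the failure probability is precisely the probability that the simple random walk $S$ on the $\mathcal L_j$-heights avoids $0$ throughout the buffer annulus, and this is controlled by the \emph{ballot theorem} (Proposition~\ref{t.ballot}), which gives the $n^{-1/2}$ rate directly --- no local CLT is needed. A last-visit decomposition handles the $N'$ loops straddling $\partial B_{l^*}$, and Proposition~\ref{l.loop} absorbs the randomness of $N$ and $N'$ into the $\eps$ loss. The same comparison with $\P_{B_{l^*}}^0$ applied once more against $\P_{D_N}^0$ gives the matching estimate for the unconditional mean, and the covariance bound follows.
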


Assuming the lemma, a second moment computation gives 

\begin{align*}
&\frac{1}{(\log N)^2} \E \left[  \sum_{k=1}^{\log N} [\tilde \Delta_k^2 - \E[\tilde \Delta_k^2]] \right]^2 \\
&\leq
\frac{2}{(\log N)^2} \sum_{l=1}^{\log N-K_0} \left( C\log\log N +\sum_{k=l+\log\log N +1 }^{\log N-K_0} \frac C {(k-l-\log\log N)^{\frac 12-\eps}}\right)  
+ \frac{CK_0^2}{(\log N)^2} \\
&\leq 
\frac{C}{(\log N)^{{\frac 12-\eps}}} 
\end{align*}
which tends to zero as $N\to \infty$. This yields \eqref{e.tilde},  and thus finishes the proof of Theorem \ref{t.clt}.

Lemma \ref{l.decouple} follows from a monotone coupling of the uniform square ice height functions introduced in \cite{RS}, and an application of the ballot theorem for random walks, stated below. 

\begin{proposition} \label{t.ballot}
Let $X$ be a mean zero random variable with finite second moment,  and $S$ is a simple random walk starting at $0$ and with step size $X$.  There exists $C>0$ such that for all $n\in\N$, 
\begin{equation*}
C^{-1}n^{-\frac12}\leq \P[S_i>0, \forall 0<i<n] \leq Cn^{-\frac12}
\end{equation*}
\end{proposition}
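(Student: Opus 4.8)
## Proof plan for Proposition~\ref{t.ballot}

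The plan is to reduce the ballot-type estimate to a classical result on persistence probabilities for random walks with zero mean and finite variance. Since $S$ is a simple random walk with step distribution $X$ (mean zero, $\E[X^2]=:\sigma^2\in(0,\infty)$), the quantity $\P[S_i>0 \text{ for all } 0<i<n]$ is exactly the probability that the walk stays strictly positive for $n-1$ steps. This is the one-sided exit probability, and it is a well-known fact — going back to Sparre Andersen's combinatorial identities, and made quantitative for finite-variance walks — that it decays like $c\, n^{-1/2}$. So the real content is to package this correctly and to make the two-sided bound explicit.

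First I would recall Sparre Andersen's identity: for a random walk with a continuous (or more generally symmetric/exchangeable-increment) step distribution, $\P[S_1>0,\dots,S_n>0]$ has an explicit generating function, $\sum_{n\ge 0}\P[S_1>0,\dots,S_n>0]\,t^n=\exp\big(\sum_{k\ge1}\tfrac{t^k}{k}\P[S_k>0]\big)$. When $X$ is not continuous one must handle ties ($S_k=0$), but since here we only need two-sided bounds up to constants, I would instead invoke the general finite-variance result directly: for any mean-zero, finite-variance, non-degenerate random walk, there exist $0<c_1\le c_2<\infty$ with $c_1 n^{-1/2}\le \P[S_i>0,\ \forall\,0<i<n]\le c_2 n^{-1/2}$. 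The upper bound is the easier half and follows from a reflection/Ottaviani-type argument together with the local CLT (or simply from the fact that a positive walk of length $n$ forces $S_{n/2}$ to be positive and then the walk to avoid hitting a level of order $\sqrt n$ below it). The lower bound is the standard one obtained by conditioning the walk to end at height of order $\sqrt n$ and using the reflection principle plus the CLT to show a uniformly positive fraction of such paths stay positive throughout.

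The steps, in order: (i) fix notation and observe $\sigma^2 = \E[X^2]>0$ (the case $X\equiv 0$ is excluded since then $S$ is not "simple"); (ii) state the Sparre Andersen generating-function identity and use $\P[S_k>0]\to 1/2$ (CLT) together with a Tauberian theorem to extract $\P[S_1>0,\dots,S_n>0]\asymp n^{-1/2}$; (iii) relate $\P[S_i>0,\forall\,0<i<n]$ to the quantity in (ii), noting the index range is $0<i<n$ rather than $0<i\le n$, which only changes things by a bounded factor; (iv) convert the asymptotic equivalence into the stated two-sided bound with a single constant $C$ valid for all $n\in\N$ (small $n$ handled trivially, since the probability is bounded below by a positive constant for each fixed $n$ and $n^{-1/2}$ is bounded). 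Alternatively, if one prefers to avoid Tauberian machinery, one can cite the result directly from a standard reference on fluctuation theory of random walks.

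The main obstacle — and it is a mild one — is the lattice/atom issue: because $X$ is integer-valued (indeed $S$ is a $\Z$-valued walk on which the height function lives), the walk can touch $0$, so the clean Sparre Andersen identity for continuous distributions does not apply verbatim, and one must either work with the identity in its general exchangeable form or argue directly via the local CLT. I expect the cleanest route is the direct probabilistic one: for the upper bound, if $S_i>0$ for $0<i<n$ then in particular $S_{\lceil n/2\rceil}=:m>0$, and conditionally on $m$ the reversed walk must stay above $-m$ for $\lfloor n/2\rfloor$ steps; summing over $m$ and using the local CLT (which gives $\P[S_j=m]\le C j^{-1/2}$ uniformly) yields the $n^{-1/2}$ decay. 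For the lower bound, restrict to the event that $S$ reaches height in $[\sqrt n, 2\sqrt n]$ at time $\lceil n/2\rceil$ without going negative and symmetrically for the second half; a reflection argument bounds this below by a constant multiple of $\P[S_{\lceil n/2\rceil}\in[\sqrt n,2\sqrt n]]\asymp 1$, giving $c\,n^{-1/2}$ after accounting for the probability of the "bridge-like" positivity constraint, which is the standard ballot estimate. Since these are textbook arguments, I would present them compactly and lean on the local central limit theorem for the finite-variance walk $S$ as the single analytic input.
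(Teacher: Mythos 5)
The paper states Proposition~\ref{t.ballot} without any proof, treating it as the classical ballot/persistence estimate for mean-zero, finite-variance random walks, and your plan --- Sparre Andersen's identity plus $\P[S_k>0]\to\tfrac12$ and a Tauberian argument, or the direct local-CLT argument in the lattice case --- is precisely the standard justification, so you are in line with the paper's treatment. Two minor points: the generating-function identity $\sum_{n\ge0}\P[S_1>0,\dots,S_n>0]\,t^n=\exp\bigl(\sum_{k\ge1}\tfrac{t^k}{k}\P[S_k>0]\bigr)$ holds for arbitrary i.i.d.\ increments (no continuity is needed; atoms only affect the value of $\P[S_k>0]$, which still tends to $\tfrac12$ under finite variance), and your local-CLT route implicitly assumes a lattice walk, which is true in the paper's application (steps in $2\Z$) but not in the proposition as stated, where the non-degeneracy $\P[X\neq0]>0$ that you flag is indeed genuinely needed for the lower bound.
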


\medskip

\begin{proof}[Proof of the Lemma \ref{l.decouple}]
Assume $k -l > \log \log N +1$ and $\log N -k >K_0$.  We denote by $l^*:= l+ \log\log N$ and $r_{l^*}:=2^{-\log\log N}r_l$,  and $B_{l^*}= B_{r_{l^*}}(0)$. Let $\C_{l+}$ be the innermost level loop outside $r_{l^*}$. We notice that,  on the event $N(A_{r_{l+ \log\log N}, r_l})\ge 1$,  $\C_{l+} \subset A_{r_{l^*}, r_l}$ .  We may write $\E_{D_N}^0\left[\tilde \Delta_k^2  \tilde \Delta_l^2  \right]
= \E_{D_N}^0\left[\E\left[\tilde \Delta_k^2| \C_{l+}\right]  \tilde \Delta_l^2  \right]$.  We claim that there exists $\eps\in (0,\frac 14)$, such that

\begin{equation}\label{e.1}
\E\left[\tilde \Delta_k^2| \C_{l+}\right] = \E_{B_{l^*}}^{0} [\tilde \Delta_k^2] + O\left(\frac 1 {(k-l- \log\log N)^{1/2-\eps}}\right)
\end{equation}
and 
\begin{equation}\label{e.2}
  \E_{B_{l^*}}^{0} [\tilde \Delta_k^2] =  \E_{D_N}^{0} [\tilde \Delta_k^2]+ O\left(\frac 1 {(k-l- \log\log N)^{1/2-\eps}}\right)
\end{equation}
from which the lemma follows. 

The estimates \eqref{e.1} and \eqref{e.2} can be proved by the same argument, so we focus on the first one.  Given the contour $\C_{l+} \subset A_{r_{l^*}, r_l}$,    we construct a coupling of $\P_{B_{l^*}}^{ 0}$ and $\P_{\C_{l+}}^{0}$  in the bulk with a power law rate of mixing (in terms of the geometric scale).  Denote the corresponding height functions by $h_{l^*}^0$  and $h_{\C_{l+}}^0$,  respectively.  By the absolute value FKG stated in Proposition \ref{p.absfkg},   $|h_{\C_{l+}}^0| $ stochastically dominates $ |h_{l^*}^0|$ inside $B_{l^*}$.  This gives rise to a natural monotone coupling of $h_{\C_{l+}}$ and $h_{l^*}$ inside $B_{l^*}$ \cite{RS},  following a standard exploration process of disagreement percolation (see, e.g.,  \cite{BM}),  which we now recall. 
\begin{itemize}
\item We explore the height function $h_{\C_{l+}}^0$ up to the boundary of $B_{l^*}$.  By the domain Markov property, the conditional law of $h_{\C_{l+}}^0$ in $B_{l^*}$ is that of a uniform homomorphism height function in $B_{l^*}$ with this boundary condition.

\item The FKG for absolute value height function implies that $|h_{\C_{l+}}^0|$ restricted in  $B_{l^*}$ stochastically dominates $ |h_{l^*}^0|$.  Consequently, we can construct a coupling between them inside $B_{l^*}$ by simultaneously exploring and revealing the values of both $|h_{\C_{l+}}^0|$ and $ |h_{l^*}^0|$ , vertex by vertex, starting from the boundary of  $B_{l^*}$ inwards, ensuring along the way that $|h_{\C_{l+}}^0|\ge |h_{l^*}^0| $.  We stop the exploration when we discover the outermost height zero loop of $h_{\C_{l+}}^0$ sourrounding $0$ and inside $B_{l^*}$,  and denote this loop by $\L^*$.

\item  The stochastic domination  and the domain Markov property implies that we can jointly sample $|h_{\C_{l+}}^0|$ and $ |h_{l^*}^0| $ inside the domain enclosed by $\L^*$,  according to a single sample from their common distribution. In this case,  we stop the exploration process and obtain a coupling of the two height functions that are equal inside $\L^*$.

\end{itemize}
Suppose the level loop $\L^*$ of $h_{\C_{l+}}^0$ with height 0,  constructed in the exploration process given above,  satisfies $\L^* \subset A_{r_k, r_{l^*}}$,  then the coupling above implies $h_{\C_{l+}}^0=h_{l^*}^0=0$ on $\L^*$, and we may couple the two fields so that they agree inside $\L^*$.  This gives
\begin{equation}
\E\left[\tilde \Delta_k^2 1_{\L^* \subset A_{r_k, r_{l^*}}}| \C_{l+}\right] = \E_{B_{l^*}}^{0} [\tilde \Delta_k^2 1_{\L^* \subset A_{r_k, r_{l^*}}}] 
\end{equation}

To estimate the probability that the coupling fails,  denote by $M$ the total number of level loops contained in $\C_{l+} \setminus B_k$.  We write $M =N+N'$,  where $N$ is the total number of level loops contained in $A_{r_k, r_{l^*},}$,  and $N'$ is the number of level loops that intersects $\partial B_{l ^*}$.  Note that conditioned on $N'$ and $N$,   the height function on the level loops forms a simple random walk in $2\Z$,  and therefore we may use a last visit decomposition (conditioning on the last visit to zero in the first $N'$ steps) to obtain

\begin{equation}
\P[S_{N'+1}S_{N'+2} \cdots S_M \neq 0]
\leq 
\sum_{i=1}^{N'} \P[S_{N'+1}S_{N'+2} \cdots S_M \neq 0| S_i =0]
\leq
 \sum_{i=1}^{N'}  \frac C {(M-i)^{1/2}}
 \leq 
  \frac {CN'} {N^{1/2}},
  \end{equation}
where the second inequality follows from the ballot theorem (Proposition \ref{t.ballot}).   Also notice,   by Lemma \ref{l.loop},  
there exist $c>0$ and $C<\infty$ such that 
\begin{equation}
\P[N' >n] \leq e^{-c n\log ( n \wedge 2^{K_0})}
\end{equation}
and 
\begin{equation}
\P[N < c(k-l-\log\log N)] \leq C e^{- c(k-l-\log\log N)}
\end{equation}
Combining these,  we may choose $K_0 = K_0(c,C)$ large enough,  and a small constant $\eps>0$,  such that the probability of coupling failure is bounded above by 
\begin{multline}
\P[\L^* \not\subset A_{r_k, r_{l^*}} ] \leq  \frac {C'\log(k-l-\log\log N)} {(k-l-\log\log N)^{1/2}} + \P[N' >\log(k-l-\log\log N)]
+ \P[N < c(k-l-\log\log N)] \\
\leq 
\frac {C''} {(k-l-\log\log N)^{1/2-\eps}} .
\end{multline}
Since $\tilde \Delta_k$ have all moments finite,  which follows from Lemma \ref{l.loop},  we apply the H\"{o}lder inequality to obtain \eqref{e.1} (with a larger $\eps$).  
\end{proof}

%
%

\bigskip
\section{The Multivariate CLT} \label{s.multi}
In this section we prove Theorem \ref{t.multi},  by modifying the argument in the previous section.  By definition,  it suffices to show for any $a_1, \cdots a_k \in \R$,  $\sum_{i=1}^k a_i \phi_N(x_{i,N}) / \sigma _{i,N}$ converges in distribution to $\mathcal N (0, \sum_{i=1}^k a_i^2 )$.

Let us set up some notations,  similar to Section \ref{s.clt}.  For each $x_{i,N}\in D_N$,  we define a sequence of level loops $\L_0(x_{i,N}), \L_1,(x_{i,N}) \cdots$, of the height function  such that $\L_0(x_{i,N}) = \partial D_N$,  and 
$$
\mathcal L_m (x_{i,N})= \text { outermost loop contained in } \L_{m-1}(x_{i,N}) \text{ such that } \left|\phi_N|_{\mathcal L_m (x_{i,N})} - \phi_N|_{\mathcal L_{m-1} (x_{i,N})} \right| =2
$$

 We also define,  for each $1\le i \le k$ and $N\in \N$, a sequence of level circuit $\{ \mathcal C_m(x_{i,N})\}_{m=1}^{\log N}$ of the height function,  such that 
$$
\mathcal C_m(x_{i,N}) = \text { outermost level loop } (\L_j)_{j\in\N} \text{ contained in } B_{m} (x_{i,N}).
$$
And we set $\mathcal F_m (x_{i,N})= \sigma \{ \phi_N(x),  x\notin \text{int }{ \mathcal C_m(x_{i,N})}  \}$.   Decompose $\phi_N(x_{i,N})$ into a telescoping sum:
\begin{equation*}
\phi_N(x_{i,N}) = \sum_{m=1}^{\log N} \E[\phi_N(x_{i,N})|\mathcal F_m (x_{i,N})] - \E[\phi_N(x_{i,N})|\mathcal F_{m-1} (x_{i,N})]
\end{equation*}
Introduce the martingale difference $\Delta_{i,m} := \E[\phi_N(x_{i,N})|\mathcal F_m (x_{i,N})] - \E[\phi_N(x_{i,N})|\mathcal F_{m-1} (x_{i,N})]$,  and we set $\sigma_{i,N} = \E_{D_N}^0[(\sum_{m=1}^{\log N} \Delta_{i,m})^2]^\frac12$.  

Notice that,  we may write 
\begin{equation*}
\sum_{i=1}^k a_i \phi_N(x_{i,N}) / \sigma _{i,N} =  \sum_{m=1}^{\log N}\sum_{i=1}^k a_i  X_{i,m},
\end{equation*}
where $X_{i,m}:= \frac{\Delta_{i,m}}{ \E_{D_N}^0[(\sum \Delta_{i,m})^2]^\frac12}$.
We apply Theorem \ref{t.martclt}  to the martingale differences $\sum_{i=1}^k a_i  X_{i,m}$.  

The first moment condition in Theorem \ref{t.martclt} follows from Proposition \ref{l.loop} as in \eqref{e.cond1},  and the triangle inequality which gives 
\begin{equation*}
\E_{D_N}^0\left[\max_{m \leq \log N} |\sum_{i=1}^k a_i  X_{i,m}| \right] \leq  \sum_{i=1}^k |a_i |  \E_{D_N}^0\left[\max_{m \leq \log N} |X_{i,m} | \right] \leq \frac{Ck \log\log N}{\log N}
\end{equation*}
which tends to $0$ as $N\to\infty$. 

To check the second moment condition of Theorem \ref{t.martclt} ,  note that 
\begin{equation*}
 \sum_{m=1}^{\log N} \left( \sum_{i=1}^k a_i  X_{i,m} \right) ^2
 = \sum_{m=1}^{\log N}  \sum_{i=1}^k a_i^2  \frac {\Delta_{i,m}^2 }{\E_{D_N}^0[(\sum \Delta_{i,m})^2] }
 + 
2  \sum_{m=1}^{\log N} \sum_{1\le i <j \le k} \frac {a_i a_j\Delta_{i,m} \Delta_{j,m}}{\E_{D_N}^0[(\sum \Delta_{i,m})^2]^\frac 12 \E_{D_N}^0[(\sum \Delta_{j,m})^2]^\frac 12}
\end{equation*}

It follows from the proof of the one point CLT in Theorem \ref{t.clt} (in particular, the proof of \eqref{e.cond2}) that the diagonal terms 
\begin{equation*}
 \sum_{m=1}^{\log N} a_i^2  \frac {\Delta_{i,m}^2 }{\E_{D_N}^0[(\sum \Delta_{i,m})^2] }
\to 
a_i^2
\end{equation*}
in probability.  We claim that the off-diagonal term will not contribute in the limit: for every $1\le i<j \le k$,
\begin{equation}\label{e.offdiag}
\sum_{m=1}^{\log N} \frac {a_i a_j\Delta_{i,m} \Delta_{j,m}}{\E_{D_N}^0[(\sum \Delta_{i,m})^2]^\frac 12 \E_{D_N}^0[(\sum \Delta_{j,m})^2]^\frac 12}
\to 
0
\end{equation}
in probability.   Combine these estimates we conclude Theorem \ref{t.multi}.  

Similar to the previous section, we define the truncated random variable $\tilde\Delta_{i,m}: = \Delta_{i,m} 1_{N(A_{r_{m+ \log\log N}, r_m}(x_{i,N}))\ge 1}$.  It follows from Lemma \ref{l.loop} that all moments of $\tilde\Delta_{i,m}$ and  $\Delta_{i,m} $ are close.  Therefore using $c_1\log N \le \E_{D_N}^0[(\sum \Delta_{j,m})^2] \le C_2 \log N$ it suffices to prove \eqref{e.offdiag} for the random variables $\tilde \Delta_{i,m} $:
\begin{equation}\label{e.offdiagtilde}
\frac 1{\log N} \sum_{m=1}^{\log N} a_i a_j\tilde\Delta_{i,m} \tilde\Delta_{j,m}
\to 
0
\end{equation}
in probability.  

The estimate \eqref{e.offdiagtilde} follows from the decoupling estimate below.  Define 
\begin{equation*}
m_0 := \min\{ m\in \N: \text{ such that } \forall 1\le i <j\le k,  B_m(x_{i,N})\cap B_m(x_{j,N}) =\emptyset  \}
\end{equation*}
being the first scale that separates the points $(x_{i,N})_{i=1}^k$.  

\begin{lemma}\label{l.cross}
Fix $1\le i<j \le k$.  There exists $C, K_0<\infty$,  and $\eps \in (0,\frac 14)$,  such that for every $m -m_0> \log \log N +1$,  and $\log N -m >K_0$,  we have 
\begin{equation}\label{e.cross1}
\E_{D_N}^0[\tilde \Delta_{i,m}  \tilde \Delta_{j,m}] \leq 
\frac C {(m-m_0 - \log\log N)^{\frac 12 -\eps}}
\end{equation}
Moreover, for every $m_0 \le l < m \le \log N -K_0$ such that $m -l> \log \log N +1$, we have 
\begin{equation}\label{e.cross2}
\cov[\tilde \Delta_{i,m}  \tilde \Delta_{j,m};  \tilde \Delta_{i,l}  \tilde \Delta_{j,l}] \leq 
\frac C {(m-l - \log\log N)^{\frac 12 -\eps}}
\end{equation}
\end{lemma}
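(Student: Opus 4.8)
\emph{Proof plan.} The plan is to deduce both \eqref{e.cross1} and \eqref{e.cross2} from the one-point conditional estimate that is already the heart of the proof of Lemma~\ref{l.decouple}, exploiting two features of the multivariate setup with no counterpart there: the symmetry $\phi\mapsto-\phi$ of the uniform homomorphism measure, and the fact that, by definition of $m_0$, the boxes $B_l(x_{i,N})$ and $B_l(x_{j,N})$ are disjoint for every $l\ge m_0$.

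First I would fix $l\ge m_0$ with $m-l>\log\log N+1$ and $\log N-m>K_0$ (for a suitably large $K_0$), put $r_{l^*}:=2^{-\log\log N}r_l$, let $\C_{l+}(x_{i,N})$ be the innermost level loop of $\phi_N$ that surrounds $x_{i,N}$ and stays outside $B_{r_{l^*}}(x_{i,N})$, and condition on the exploration of $\phi_N$ from $\partial D_N$ inward down to $\C_{l+}(x_{i,N})$. The core step is the estimate
\begin{equation*}
\Bigl|\,\E_{D_N}^0\bigl[\,\tilde\Delta_{i,m}\,\big|\,\C_{l+}(x_{i,N})\,\bigr]\,\Bigr|\;\le\;\frac{C}{(m-l-\log\log N)^{1/2-\eps}} ,
\end{equation*}
which I would obtain word for word as \eqref{e.1}--\eqref{e.2}: on the event $\{N(A_{r_{l^*},r_l}(x_{i,N}))\ge1\}$ the loop $\C_{l+}(x_{i,N})$ lies in $A_{r_{l^*},r_l}(x_{i,N})$; by Proposition~\ref{p.absfkg} the conditional law of $|\phi_N|$ in $B_{r_{l^*}}(x_{i,N})$ stochastically dominates the law of $|h^0|$ with $h^0\sim\P^0_{B_{r_{l^*}}(x_{i,N})}$, so the disagreement-percolation exploration from $\partial B_{r_{l^*}}(x_{i,N})$ inward couples $\phi_N$ and $h^0$ to agree inside the outermost height-zero loop $\L^*$ around $x_{i,N}$; Proposition~\ref{l.loop} and the ballot theorem (Proposition~\ref{t.ballot}), used exactly as in Lemma~\ref{l.decouple}, give $\P[\,\L^*\not\subset A_{r_m,r_{l^*}}(x_{i,N})\,]\le C(m-l-\log\log N)^{-1/2+\eps}$ once $K_0$ is large; and on the complementary event $\tilde\Delta_{i,m}$ is the same functional of $h^0$, whose expectation $\E^0_{B_{r_{l^*}}(x_{i,N})}[\tilde\Delta_{i,m}]$ equals $0$ because $\phi\mapsto-\phi$ preserves $\P^0_{B_{r_{l^*}}(x_{i,N})}$, sends $\Delta_{i,m}$ to $-\Delta_{i,m}$ (the family of level loops being fixed setwise while all their heights change sign), and fixes the truncation event (which depends only on the number of level loops). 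The contribution of the small-probability bad events is absorbed by H\"older's inequality, using the finiteness of all moments of $\tilde\Delta_{i,m}$ (Proposition~\ref{l.loop}), at the cost of enlarging $\eps$ slightly --- and this is also what lets the core estimate be paired below with factors that are merely $L^q$-bounded.

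Next I would peel off the remaining factors. The key point is that for $l\ge m_0$ the loop $\C_{l+}(x_{i,N})$ is contained in $B_l(x_{i,N})$, which is disjoint from $B_l(x_{j,N})$, and it surrounds $x_{i,N}$ but not $x_{j,N}$; since two level loops of the same nested family cannot cross, any level loop relevant to $\tilde\Delta_{j,l}$ or $\tilde\Delta_{j,m}$ --- one that surrounds $x_{j,N}$ and lies in a box around $x_{j,N}$ not containing $x_{i,N}$ --- lies entirely outside $\C_{l+}(x_{i,N})$ and is therefore revealed by the conditioning, and the same is true of the loops defining $\tilde\Delta_{i,l}$ exactly as in Lemma~\ref{l.decouple}. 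Hence $\tilde\Delta_{j,m},\tilde\Delta_{i,l},\tilde\Delta_{j,l}$ are all measurable for the conditioning $\sigma$-algebra and may be pulled outside the conditional expectation in the core estimate. Taking $l=m_0$ and using H\"older (all the $\tilde\Delta$'s having uniformly bounded moments) gives
\begin{equation*}
\E_{D_N}^0[\tilde\Delta_{i,m}\tilde\Delta_{j,m}]
=\E_{D_N}^0\bigl[\,\tilde\Delta_{j,m}\,\E_{D_N}^0[\tilde\Delta_{i,m}\mid\C_{l+}(x_{i,N})]\,\bigr]
\le\frac{C}{(m-m_0-\log\log N)^{1/2-\eps}},
\end{equation*}
which is \eqref{e.cross1}; and for an arbitrary $l\ge m_0$ the same computation bounds $\bigl|\E_{D_N}^0[\tilde\Delta_{i,m}\tilde\Delta_{j,m}\tilde\Delta_{i,l}\tilde\Delta_{j,l}]\bigr|$ and $\bigl|\E_{D_N}^0[\tilde\Delta_{i,m}\tilde\Delta_{j,m}]\bigr|$ by $C(m-l-\log\log N)^{-1/2+\eps}$, which together with $\bigl|\E_{D_N}^0[\tilde\Delta_{i,l}\tilde\Delta_{j,l}]\bigr|\le c_2$ (Cauchy--Schwarz) gives \eqref{e.cross2} after subtracting.

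I expect the main obstacle to be not the probabilistic input --- that is the same coupling argument as in Lemma~\ref{l.decouple} --- but the geometric bookkeeping of the previous paragraph: one has to be sure that the exploration around $x_{i,N}$ never reveals the regions carrying $\tilde\Delta_{j,l}$ or $\tilde\Delta_{j,m}$ only partially, which is delicate exactly when $l$ is close to $m_0$ and the boxes $B_l(x_{i,N})$ and $B_{l-1}(x_{j,N})$ overlap. This is handled by the non-crossing property of level loops together with the observation that, for $l\ge m_0$, no level loop contained in a box around one of the two points can surround the other.
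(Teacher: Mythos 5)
Your proposal follows essentially the same route as the paper: condition on the innermost level loop around $x_{i,N}$ outside scale $m_0^*$ (resp. $l^*$), run the monotone-coupling/ballot-theorem argument of Lemma~\ref{l.decouple} to show $\E[\tilde\Delta_{i,m}\mid\C_{\cdot+}(x_{i,N})]$ is $O\bigl((m-l-\log\log N)^{-1/2+\eps}\bigr)$, and pull out the remaining $\tilde\Delta$ factors using their measurability with respect to the conditioning and their bounded moments. Your added justifications — the $\phi\mapsto-\phi$ symmetry forcing $\E^0_{B}[\tilde\Delta_{i,m}]=0$, and the disjointness/non-crossing bookkeeping for why $\tilde\Delta_{j,m},\tilde\Delta_{i,l},\tilde\Delta_{j,l}$ are revealed by the exploration — are exactly the steps the paper leaves implicit, so the argument is correct and not genuinely different.
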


Assuming the lemma,  we may estimate the second moment in \eqref{e.offdiagtilde} as 
\begin{align*}
&\frac 1{(\log N)^2} \sum_{m=1}^{\log N}  a_i^2 a_j^2 \E_{D_N}^0[\tilde \Delta_{i,m}^2  \tilde \Delta_{j,m}^2]
+ 
\frac 1{(\log N)^2} \sum_{l, m=1}^{\log N} a_i^2 a_j^2\cov[\tilde \Delta_{i,m}  \tilde \Delta_{j,m};  \tilde \Delta_{i,l}  \tilde \Delta_{j,l}]  \\
&\leq
C_1 \frac 1{\log N}  + \frac 1{\log N}   \left( C_2 \log\log N + \sum_{l=1}^{\log N-K_0} \sum_{m= l+\log\log N+2}^{\log N-K_0}  \frac C {(m-l - \log\log N)^{\frac 12 -\eps}} \right) \\
&+ \frac{CK_0^2}{(\log N)^2}
\leq 
\frac{C}{(\log N)^{{\frac 12-\eps}}} 
\end{align*}
which tends to 0 as $N \to \infty$.

Finally we explain how Lemma \ref{l.cross} follows from a similar proof of Lemma \ref{l.decouple}.

\begin{proof}[Proof of Lemma \ref{l.cross}]
Denote by $m_0^*= m_0+\log\log N$,  and let $\C_{m_0^*+}(x_{i,N})$ be the innermost level loop outside $B_{r_{m_0^*}}(x_{i,N})$. We notice that,  $\C_{m_0^*+}(x_{i,N})\subset A_{r_{m_0^*}, r_{m_0}}(x_{i,N})$ .  We may write $\E_{D_N}^0[\tilde \Delta_{i,m}  \tilde \Delta_{j,m}]
= \E\left[\E\left[\tilde \Delta_{i,m} | \C_{m_0^*+}(x_{i,N})\right]   \tilde \Delta_{j,m}  \right]$.  Notice that, the same  monotone coupling argument that leads to \eqref{e.1} gives that for every $m -m_0> \log \log N +1$,  and $\log N -m >K_0$,  

\begin{equation}
\E\left[\tilde \Delta_{i,m} |\C_{m_0^*+}(x_{i,N})\right]   = \E_{B_{m_0^*}}^{0} [\tilde \Delta_{i,m}] + O(\frac 1 {(m-m_0- \log\log N)^{1/2-\eps}}) 
= O(\frac 1 {(m-m_0- \log\log N)^{1/2-\eps}}) 
\end{equation}
 This yields \eqref{e.cross1}.  
 
 To prove \eqref{e.cross2},  denote by $l^*= l+\log\log N$,  and let $\C_{l^*+}(x_{i,N})$ be the innermost level loop outside $B_{r_{l^*}}(x_{i,N})$. We notice that,  $\C_{l+} \subset A_{r_{l^*}, r_l}(x_{i,N})$ .  We may write $\E_{D_N}^0[\tilde \Delta_{i,m}  \tilde \Delta_{j,m}  \tilde \Delta_{i,l}  \tilde \Delta_{j,l}] 
= \E\left[\E\left[ \tilde \Delta_{i,m}|\C_{l^*+}(x_{i,N})\right]   \tilde \Delta_{j,m}  \tilde \Delta_{i,l}  \tilde \Delta_{j,l} \right]$. 
 The same argument that leads to \eqref{e.1} gives that for $m -l> \log \log N +1$

\begin{equation}
\E\left[\tilde \Delta_{i,m}|\C_{l^*+}(x_{i,N})\right]   
= O(\frac 1 {(m-l- \log\log N)^{1/2-\eps}}) 
\end{equation}
Combining this with \eqref{e.cross1} we obtain \eqref{e.cross2}.  
\end{proof}

\section{Open questions} \label{s.open}

\textbf{Asymptotic variance. }  As mentioned in the introduction,  it remains open to prove  $\frac{\sigma_N}{\log ^\frac 12 N} \to g$,  for some $g>0$.  From our point of view, this remains the main challenge to obtain the conjectured GFF scaling limit for the uniform square ice height function. 

The proof presented in Section \ref{s.clt} suggests that asymptotic convergence of the variance would follow from 
\begin{equation*}
\frac{1}{\log N}   \sum_{k=1}^{\log N} \E[\tilde \Delta_k^2] \to g
\end{equation*}
Proving this claim requires new ideas,  and essentially requires some approximate scale invariance. 

\medskip

\textbf{Mixing rate for the height function.}  We obtained an upper bound for the mixing rate for the gradients of the height function,  which is algebraic in terms of the geometric scale (see Lemma \ref{l.decouple}).   We expect that the precise mixing rate is algebraic in the physical scale,    similar to the gradients of DGFF.

\medskip
\textbf{Central limit theorem for generalised solid-on-solid models.}   The (generalised) solid-on-solid models is an important class of random surface models which exhibit a roughening transition in two dimensions.  To define it,  let $\phi: \Z^2 \to \Z$ be integer valued height functions sampled from the Gibbs weight 

\begin{equation*}
d\mu(\phi) = \text{normalization} \times \exp\left( -\beta \sum_{x\sim y}V(\phi(x) -\phi(y)) \right)
\end{equation*}
where $V: \Z\to \R$ is a symmetric convex potential.  Prominent examples include $V(x) = \frac{x^2}2$ (Discrete Gaussian) and $V(x) =|x|$.   For $\beta$ sufficiently small,  it is conjectured that the scaling limit of the height function is given by a GFF.  The conjecture is still widely open (except for the quadratic case).   The seminal work of Fr\"{o}hlich and Spencer \cite{FS} established the logarithmic fluctuation of the Discrete Gaussian and the absolute value potential model.  There are extensive recent results on solid-on-solid models: the GFF scaling limit for the Discrete Gaussian case was proved in \cite{BPR1, BPR2},  and the (qualitative) delocalization were established for general potential \cite{Lam, LO, AHPS,EL}.  

It is conceivable that the method of this paper also extends to the Gaussianity of limit for generalised solid-on-solid models,  which remains open except for the quadratic case.   Indeed,  the FKG inequality for the absolute value height function has been established for the solid-on-solid models (see e.g.,  \cite{LO}).  If the Russo-Seymour-Welsh estimate can be proved then one can carry out the arguments of this paper to obtain the analogues of Theorem \ref{t.clt}.

 \subsection*{Acknowledgments}
We thank Jian Ding and Chenlin Gu for helpful discussions,  and Jian Ding for helpful comments on previous versions of the manuscript. The research was partially supported by the National Key R\&D Program of China and a grant from Shanghai Ministry Education Commission. 

{\small
\bibliographystyle{amsplain}
\bibliography{6vertex}
}
\end{document}